 \let\mathscr\relax
\DeclareMathOperator{\vol}{Vol}
\newtheorem{thm}{Theorem}[section]
\newtheorem{cor}[thm]{Corollary}
\newtheorem{prop}[thm]{Proposition}
\newtheorem{lem}[thm]{Lemma}
\newtheorem{claim}[thm]{Claim}
\theoremstyle{definition}
\theoremstyle{remark}
\newtheorem{rem}[thm]{Remark}
\numberwithin{equation}{section}
\begin{document}
		\title{ Planck-scale number of nodal domains for toral eigenfunctions}
	\begin{abstract}
We study the number of nodal domains in balls shrinking slightly above the Planck scale for \textquotedblleft generic'' toral eigenfunctions. We prove that, up to the natural scaling, the nodal domains count obeys the same asymptotic law as the global number of nodal domains. The proof, on one hand, uses new arithmetic information  to refine Bourgain's de-randomisation technique at Planck scale. And on the other hand, it requires a Planck scale version of Yau's conjecture which we believe to be of independent interest. 
	\end{abstract}
	\author{Andrea Sartori}
	\address{ Departement of Mathematics, King's College London, Strand, London WC2R 2LS, England, Uk }
	\email{andrea.sartori.16@ucl.ac.uk}
	\maketitle
	
	\section{Introduction}
\subsection{Laplacian eigenfunctions and nodal domains}	
	 Given a compact Riemannian surface $(M,g)$ without boundary, let $\Delta_g$ be the Laplace-Beltrami operator on $M$. There exists an orthonormal basis for $L^2(M,d\vol)$ consisting of eigenfunctions $\{f_{E_i}\}$	
\begin{align}
\Delta_g f_{E_i}+ E_i f_{E_i}=0  \nonumber
\end{align}
with $0=E_1<E_2\leq...$ listed with multiplicity, and $E_i\rightarrow \infty$.  The \textit{nodal set} of an eigenfunction $f_E$ is the zero set $Z(f_E):= \{x\in M: f_E(x)=0\}$ and it is the union of  smooth curves outside a finite set of points \cite{C}.   The  connected components of  $M\backslash Z(F_E)$ are called \emph{nodal domains} and we denote their number by $\mathcal{N}(f_E)$. The main object of our interest is to count the number of nodal domains of $f_E$. 

The celebrated Courant Nodal Domains Theorem \cite{CH} implies that there exists an explicit constant $C>0$ such that	
\begin{align}
\mathcal{N}(f_E) \leq C \cdot E. \label{Courant}
\end{align}
Stern \cite{ST} showed that, on some planar domains, there exists a sequence of eigenfunctions such that the eigenvalue grows to infinity, but $\mathcal{N}(f_E)= 2$, see also \cite{LEW} for a similar result on the two dimensional sphere. Jung and Zelditch \cite{JZ} proved that for most eigenfunctions on certain negatively curved manifolds $\mathcal{N}(\cdot)$ tends to infinity with the eigenvalue. Ingremeau \cite{I} also gave examples of eigenfunctions with $\mathcal{N}(\cdot) \rightarrow \infty$ on unbounded negatively-curved manifolds. 
\subsection{The Random Wave Model}
  For \textquotedblleft generic" eigenfunctions, the Random Wave Model proposed by Berry \cite{B1,B2} together with the breakthrough work of Nazarov and Sodin \cite{NS} assert that there exists a constant $c>0$ such that 
 \begin{align}
 \mathcal{N}(f_E)= c\cdot E (1+o(1)). \label{Bourgain}
 \end{align}
 Remarkably, Bourgain \cite{BU}  proved that there exist sequences of eigenfunction on the standard flat torus $\mathbb{T}^2=\mathbb{R}^2/\mathbb{Z}^2$ such that \eqref{Bourgain} holds. Subsequently, Buckley and Wigman \cite{BW} extended Bourgain's work to \textquotedblleft generic'' toral eigenfunctions. 
 
 We study a finer form of \eqref{Bourgain}: let $s>0$ and let $\mathcal{N}_{f_E}(s,z)$ be the number of nodal domains lying entirely inside the geodesic ball of radius $s$ around the point $z\in M$; then the Random Wave Model would also predict that
 \begin{align}
 \mathcal{N}_{f_E}(s,z)= c \cdot E (\pi s^2) (1+o(1))\label{Shrinking}
 \end{align}
 uniformly in $z$, provided that $s \cdot E^{1/2} \rightarrow \infty$, i.e. provided that the radius of the ball shrinks slightly above the \textit{Planck-scale}. We prove that \eqref{Shrinking} holds for \textquotedblleft generic \textquotedblright toral eigenfunctions with $s> E^{-1/2+ o(1)}$. 
\subsection{Statement of main results}
\label{main results}
Every Laplace eigenfunction on $\mathbb{T}^2$ can be written as 
\begin{align}
f(x)=f_E(x)=\sum_{ \substack{ \xi \in \mathbb{Z}^2\\|\xi|^2=E} }a_{\xi}e(\langle x,\xi \rangle)  \label{function}
\end{align}
where $\{a_{\xi}\}_{\xi}$ are complex coefficients and $e(\cdot)= e(2\pi i \cdot )$ (This normalisation implies that the eigenvalue is $4\pi E$, but we will make no distinction between $E$ and $4\pi E$). The eigenvalues are integers $E\in S:=\{E \in \mathbb{N}:  E= a^2+b^2 , \text{ for some} \hspace{1mm} a,b \in \mathbb{Z}\}$  and their multiplicity, which we denote by $N=N(E)$, is given by the number of lattice points on the circle of radius $\sqrt{E}$. Moreover, we assume that $\bar{a_{\xi}}=a_{-\xi}$, that is $f$ is real-valued, and  that $f$ is normalised via 
\begin{align}
||f||^2_{L^2(\mathbb{T}^2)}=\sum_{ |\xi|^2=E}|a_{\xi}|^2=1 \label{normalisation} . 
\end{align}

Thanks to (\ref{normalisation}), we can regard the set $(a_{\xi})_{\xi}$ as points on an $N$-dimensional complex sphere. Then, L\'{e}vy concentration of measure \cite[Theorem 2.3]{LE} implies that, most $a_{\xi}$ are small,  $|a_{\xi}|^2\leq (\log N)^{O(1)}/ N$ say,  with probability asymptotic to $1$. Therefore, we say that $f$ is \textit{flat} if, for all $\rho>0$
\begin{align}
 &\max_{|\xi|^2=E}|a_{\xi}|^2= o(N^{-1+\rho}) & \text{as} \hspace{3mm}N\rightarrow \infty. \nonumber
\end{align}   
Also, via \eqref{normalisation}, we associate to $f$ the probability measure on the unit circle $\mathbb{S}^1= \mathbb{R}/\mathbb{Z}$ 
\begin{align}
\mu_{f}=\sum_{ |\xi|^2=E} |a_{\xi}|^2\delta_{\xi/\sqrt{E}} \label{spectral measure}
\end{align} 
where $\delta_{\xi/\sqrt{E}}$ is the Dirac distribution at the point $\xi/\sqrt{E}$. Finally, we denote by $c_{NS}(\mu_{f})$ the \textit{Nazarov-Sodin} constant relative to the measure $\mu_{f}$. In order to present our main result, we differ the discussion about $c_{NS}(\cdot)$  to Section   \ref{Gaussian random fields} below. Our principal result is the following: 
\begin{thm}
	\label{theorem 3}
	There exists a density one subset \footnote{
		By a density one subset we mean a set $S'\subset S$ such that $\lim\limits_{X\rightarrow \infty } \frac{\#\{E\leq X: E\in S'\}}{\#\{E\leq X: E\in S\}}=1$. } $S'\subset S$  such that for all $\epsilon>0$ we have
	\begin{align}
	\mathcal{N}_f(s,z)= c_{NS}(\mu_{f})\pi s^2 E(1+o_{E\rightarrow \infty}(1))  \nonumber
	\end{align}
	uniformly for $f$ flat, $s> E^{-1/2+ \epsilon}$ and $z\in \mathbb{T}^2$. 
\end{thm}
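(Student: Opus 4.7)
My approach would follow the tiling strategy of Nazarov--Sodin and Buckley--Wigman, adapted to the Planck scale. Fix $\epsilon>0$ and choose an auxiliary parameter $R=R(E)\to\infty$ so slowly that $R=o(E^{\epsilon/2})$, and set $r=R/\sqrt{E}$, so that $r/s\to 0$. Tile the ball $B(z,s)$ by axis-parallel squares $Q_j$ of side $r$; there are $M\sim \pi s^2 E/R^2\to\infty$ such squares entirely contained in $B(z,s)$. I would decompose
\begin{equation*}
\mathcal{N}_f(s,z)=\sum_{j} \mathcal{N}_{f}^{\mathrm{int}}(Q_j)+\mathrm{Err},
\end{equation*}
where $\mathcal{N}_{f}^{\mathrm{int}}(Q_j)$ counts nodal components of $f$ lying entirely inside $Q_j$, and $\mathrm{Err}$ accounts for domains either crossing the internal grid or meeting $\partial B(z,s)$.

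For the main term I would rescale on each square by $\sqrt{E}$: set $F_j(y)=f(x_j+y/\sqrt{E})$ for $y\in[-R/2,R/2]^2$, where $x_j$ is the centre of $Q_j$. The core analytic step is a Planck-scale de-randomisation asserting that, for $E$ in a density-one subset $S'$ and $f$ flat, the empirical distribution of $\{F_j\}_{j\leq M}$ is close, in a topology strong enough to control nodal components (say $C^1$ on compacts), to the law of the translation-invariant Gaussian random wave $F_\infty$ with spectral measure $\mu_f$. This is a refinement of Bourgain's de-randomisation from the global scale to a random Planck-scale box, and the required new arithmetic input concerns the joint distribution of lattice points $\xi$ on $|\xi|^2=E$: equidistribution with respect to $\mu_f$ tested against characters supported on angular arcs of size $E^{-\epsilon/2}$, together with square-root cancellation of the corresponding short exponential sums. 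Granted this input, the Nazarov--Sodin integral-geometric sandwich applied on $[-R/2,R/2]^2$ yields $\mathcal{N}_{f}^{\mathrm{int}}(Q_j)=c_{NS}(\mu_f)R^2(1+o(1))$ on $(1-o(1))M$ squares, and summing produces the main contribution $c_{NS}(\mu_f)\pi s^2 E(1+o(1))$.

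To control $\mathrm{Err}$ I would use the bound that the number of nodal components of $f$ meeting a rectifiable curve $\gamma$ is $O(\sqrt{E}\cdot\mathrm{length}(\gamma))$, itself a consequence of the Planck-scale upper Yau bound $\mathrm{length}(Z(f)\cap B)\lesssim \sqrt{E}\cdot\mathrm{area}(B)$ valid for any ball $B$ of radius $\gg 1/\sqrt{E}$. Applied to the internal grid (of total length $\lesssim s^2\sqrt{E}/R$), this yields $O(s^2 E/R)=o(s^2 E)$; applied to $\partial B(z,s)$ (of length $2\pi s$), this yields $O(s\sqrt{E})=o(s^2 E)$, since $s\sqrt{E}>E^{\epsilon}\to\infty$. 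Combining with the main-term asymptotics closes the proof.

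The hard part will be the Planck-scale de-randomisation. At the global scale Bourgain averages over the whole torus and needs only a single application of the second-moment method; here the averaging takes place over $M$ Planck-scale squares densely packed inside a region of area $\pi s^2=o(1)$, and near-Gaussian behaviour must hold simultaneously on almost all of them. This forces quantitative control of the angular distribution of lattice points on $|\xi|^2=E$ at a strictly finer scale than in \cite{BU,BW}, and this is precisely where the density-one subset $S'$ will be defined. The Planck-scale upper Yau bound is closely related and should emerge from the same local-uniformity mechanism, but I expect the de-randomisation to be the dominant analytic obstacle.
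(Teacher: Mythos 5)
Your overall architecture matches the paper's: tile (or, in the paper's version, continuously average) at scale $R/\sqrt{E}$, apply a Planck-scale de-randomisation to show the rescaled restrictions are Gaussian-like, run Nazarov--Sodin/Buckley--Wigman on the resulting random field, and use a Planck-scale Yau estimate to control the boundary error. The paper's Proposition \ref{semi-locality} is precisely your tiling decomposition in continuous form, Proposition \ref{main prop} is your de-randomisation step, and Proposition \ref{claim} is the Yau input. The discrete tiling you propose is equivalent to the paper's averaging $\frac{E}{R^2}\int_{B(s,z)}\mathcal{N}_f(R/\sqrt{E},x)\,dx$; neither buys anything over the other.

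There is, however, a genuine gap in your error control, and it is located exactly where the paper introduces a new idea. You assert that the number of nodal components meeting a rectifiable curve $\gamma$ is $O(\sqrt{E}\cdot\mathrm{length}(\gamma))$, claiming this follows from an upper Yau bound $\mathrm{length}(Z(f)\cap B)\lesssim\sqrt{E}\cdot\mathrm{area}(B)$ \emph{for any ball $B$ of radius $\gg 1/\sqrt{E}$}. That last clause is false for deterministic toral eigenfunctions: the paper's proof of the upper bound in Proposition \ref{claim} goes through Jensen and Nazarov--Tur\'an, which at radius $r$ gives $\mathcal{L}_f(r,z)r^{-1}\ll N+r\sqrt{E}$ with $N=N(E)$, and $N$ can be as large as $\exp(\log E/\log\log E)$. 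At radius $r=R/\sqrt{E}$ with $R$ growing slowly this reads $\ll N+R$, where $N$ dominates; the $N$-term is only absorbed when $r>E^{-1/2+\epsilon}$, which your Planck-scale cells are \emph{not}. The sectoral-harmonic example in the paper shows the bound genuinely fails at strict Planck scale on other surfaces. Moreover, the implication from a nodal-length bound in a tube to a count of components meeting $\gamma$ is itself not immediate, since Faber--Krahn places a disk of radius $c/\sqrt{E}$ somewhere inside each domain but not necessarily near $\gamma$. The paper's Proposition \ref{semi-locality} avoids both issues: it applies the Yau upper bound only at the macroscopic scale $s>E^{-1/2+\epsilon}$, uses it to bound the number of domains of diameter $\geq L/\sqrt{E}$ by $O(s^2E/L)$, observes that smaller domains touch only $O(L^2/R^2)$ of the radius-$R/\sqrt{E}$ balls, and optimizes $L=\sqrt{R}$ to get an error $O(Es^2/\sqrt{R})$. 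Your bound $O(s^2E/R)$ is recoverable if you count zeros of $f$ along the \emph{full} grid lines of length $\sim s$ (where $s\sqrt{E}\gg N$), rather than invoking a sub-$E^{-1/2+\epsilon}$ Yau bound on individual cells; but as written the step does not hold. Finally, your description of the arithmetic input as equidistribution on arcs of size $E^{-\epsilon/2}$ with square-root cancellation is off target: what the paper actually needs is the spectral quasi-correlation lower bound $\|\xi_1+\cdots+\xi_\ell\|>Q$ with $Q$ just below $\sqrt{E}$ (Theorem \ref{semi}), which forces the Bessel oscillatory terms in the moment computation over $B(s,z)$ to decay, together with Bombieri--Bourgain's count of spectral correlations (Theorem \ref{BB}).
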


\begin{rem}
	Using the main result in \cite{SA}, Theorem \ref{theorem 3} still holds if we take $s$ such that for all $m>0$ we have $s \cdot E^{1/2}/ (\log E)^m \rightarrow \infty$. For the sake of elegance of the presentation, we decided not to include it in this manuscript.  
\end{rem}

The sequence $\{\mu_{f}\}$, even in the special case $a_{\xi}=1/\sqrt{N}$ for all $\xi's$,  does not have a unique limit point with respect to the weak$^{\star}$ topology on $\mathbb{S}^1$ \cite{CI,KW, SA2}. Thus, in order to obtain an asymptotic behaviour for $N(f_E)$, we have to pass to subsequences. Kurlberg and Wigman proved \cite[Theorem 1.3]{KW2}  that if $\mu_{f}$ weak$^{\star}$ converges to some probability measure $\mu$ on $\mathbb{S}^1$, then  $c_{NS}(\mu_{f})= c_{NS}(\mu)(1+o(1))$. This implies the following version of Theorem \ref{theorem 3}: 
\begin{cor}
	Under the assumptions of Theorem \ref{theorem 3}, suppose that $\mu_{f}$ weak$^{\star}$ converges to some probability measure $\mu$ on $\mathbb{S}^1$, then 
	\begin{align}	
	\mathcal{N}_f(s,z)= c_{NS}(\mu)\pi s^2 E(1+o(1)). \nonumber
	\end{align}
	uniformly for $f$ flat, $s> E^{-1/2+ \epsilon}$ and $z\in \mathbb{T}^2$. 
\end{cor}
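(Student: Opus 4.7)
The corollary is a one-step deduction combining Theorem \ref{theorem 3} with the continuity of the Nazarov-Sodin constant proved by Kurlberg and Wigman, so the plan is straightforward. First, I would apply Theorem \ref{theorem 3} directly to the eigenfunction $f$: for $E \in S'$, $f$ flat, $s > E^{-1/2+\epsilon}$ and any $z \in \mathbb{T}^2$, this gives
\[
\mathcal{N}_f(s,z) = c_{NS}(\mu_f)\,\pi s^2\, E\,(1+o_{E\to\infty}(1)),
\]
with the error term uniform in $z$ and $s$. The only remaining task is to replace the eigenfunction-dependent constant $c_{NS}(\mu_f)$ by the limiting constant $c_{NS}(\mu)$ attached to the weak$^{\star}$ limit $\mu$.

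For this step I would invoke \cite[Theorem 1.3]{KW2}, cited in the paragraph preceding the corollary, which asserts that whenever $\mu_f$ converges weak$^{\star}$ to $\mu$ on $\mathbb{S}^1$ one has $c_{NS}(\mu_f) = c_{NS}(\mu)(1+o(1))$ as $E \to \infty$ along the relevant subsequence. Substituting this into the asymptotic above and multiplying out the two $(1+o(1))$ factors yields
\[
\mathcal{N}_f(s,z) = c_{NS}(\mu)\,\pi s^2\, E\,(1+o(1)),
\]
which is the conclusion of the corollary.

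There is essentially no obstacle to overcome, but one bookkeeping point deserves a sentence of comment: the $o(1)$ error supplied by the Kurlberg-Wigman comparison depends only on $E$ (and on the mode of convergence $\mu_f \to \mu$), and in particular is independent of the spatial variables $z$ and $s$. It therefore combines with the uniform error term from Theorem \ref{theorem 3} without damaging the uniformity in $z \in \mathbb{T}^2$ and in $s > E^{-1/2+\epsilon}$. All the substantive work has already been carried out in Theorem \ref{theorem 3}; the corollary should be understood as a convenient repackaging of that result along a weak$^{\star}$-convergent subsequence of spectral measures, which is the natural way to produce a genuine limiting constant in view of the non-uniqueness of accumulation points of $\{\mu_f\}$ noted just above the statement.
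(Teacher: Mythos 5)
Your argument is exactly the paper's: the corollary is obtained by substituting the Kurlberg--Wigman continuity statement $c_{NS}(\mu_f)=c_{NS}(\mu)(1+o(1))$ into the asymptotic of Theorem \ref{theorem 3}, and the paper presents it precisely as such an immediate consequence. The bookkeeping remark on uniformity in $z$ and $s$ is a correct observation and does not change the substance.
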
 
\subsection{Nodal length in shrinking balls}
One of the novel aspects in the proof of Theorem \ref{theorem 3} is the study of the \textit{nodal length}, that is the Hausdorff measure of the nodal set, of toral eigenfunctions in shrinking balls. The main (open) question about the nodal length of Laplace eigenfunctions is the following conjecture of Yau: let $f_E$ be a Laplace eigenfunction with eigenvalue $E$ on a smooth, compact manifold without boundaries $M$, then 
\begin{align}
\sqrt{E}\ll_M \mathcal{L}(f_E)= \mathcal{H}^{n-1}\{x\in M: f(x)=0\}\ll_M \sqrt{E} \nonumber
\end{align} 	
Donnelly and Fefferman \cite{DF} showed that Yau's conjecture holds for  real-analytic manifolds. Recently, Logunov and Malinnikova \cite{LM,L1,L2} proved the lower-bound in the smooth case and gave a polynomial upper-bound.

As for the nodal domains count, the Random Waves Model suggests that, for \textquotedblleft generic" Laplace eigenfunctions,  a rescaled version of Yau's conjecture should hold at small scales, that is for any $z\in M$ 
\begin{align}
s\sqrt{E}\ll_M s^{-1}\mathcal{L}_f(s,z):= s^{-1}\mathcal{H}^{n-1}\{x\in B(s,z): f(x)=0\}\ll_M s\sqrt{E}. \nonumber
\end{align}
provided that $s$ shrinks slightly above Plank-scale. We prove the following:
\begin{prop}
	\label{claim}
	Let $f$ be as in \eqref{function} and let $\epsilon>0$, then 
	\begin{align}
	s\sqrt{E}\ll	\mathcal{L}_f(s,z)s^{-1}\ll s\sqrt{E} \nonumber
	\end{align}
	uniformly for $s> E^{-1/2+\epsilon}$ and $z\in \mathbb{T}^2$. 
\end{prop}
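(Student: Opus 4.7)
The plan is to rescale the problem to the Helmholtz equation at unit wavenumber. Put $R := s\sqrt{E}$, so by hypothesis $R \geq E^{\epsilon}\to\infty$, and set $F(y) := f(z+y/\sqrt{E})$. Then
\[
F(y) \;=\; \sum_{|\xi|^2=E} a_\xi\, e(\langle z,\xi\rangle)\, e\!\left(\left\langle y,\xi/\sqrt{E}\right\rangle\right),
\]
the rescaled frequencies lie on the unit circle, and a change of variables gives $\mathcal{L}_F(R,0) = \sqrt{E}\,\mathcal{L}_f(s,z)$. The statement therefore reduces to the two-sided bound $\mathcal{L}_F(R,0) \asymp R^{2}$, uniformly in $z$.

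For the upper bound I would invoke the Cauchy--Crofton formula, writing the nodal length as an integral over affine lines of the number of nodal intersections. The restriction of $F$ to any such line is a one-dimensional real-analytic exponential sum whose frequencies $\langle v_\theta,\xi/\sqrt{E}\rangle$ all lie in $[-1,1]$, so $F|_L$ is a real-analytic function of exponential type $\leq 1$. A standard Bernstein / Jensen-type bound gives $O(\ell+1)$ zeros on any interval of length $\ell$, and since $R\to\infty$ the $+1$ is negligible; integrating against the Crofton measure yields $\mathcal{L}_F(R,0) \ll R^{2}$.

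For the matching lower bound my plan is to couple $F$ to its Gaussian avatar
\[
G(y) \;:=\; \sum_{|\xi|^2=E} c_\xi\, e\!\left(\left\langle y,\xi/\sqrt{E}\right\rangle\right),
\]
where the $c_\xi$ are complex Gaussians with $\mathbb{E}|c_\xi|^{2} = |a_\xi|^{2}$ and $c_{-\xi} = \overline{c_\xi}$. The Kac--Rice formula gives the exact mean $\mathbb{E}[\mathcal{L}_G(R,0)] = \kappa(\mu_f)\cdot \pi R^{2}$ with a density $\kappa(\mu_f)$ bounded below by a positive absolute constant whenever $\mu_f$ charges at least two non-parallel directions, which is automatic for $E\in S$ large enough. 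A second-moment Kac--Rice computation (in the spirit of \cite{BW}) bounds the variance by $O(R^{2})$, so $\mathcal{L}_G(R,0) \gg R^{2}$ with probability $1-o(1)$. One then de-randomises in the style of Bourgain: varying the translation $z$ causes the phase vector $(a_\xi e(\langle z,\xi\rangle))_\xi$ to equidistribute on the sub-torus cut out by the $\mathbb{Z}$-relations among the $\xi$'s, and this equidistribution, applied to the Kac--Rice integrand, forces the Gaussian lower bound for a positive proportion of translations, hence, by translation invariance and continuity of $\mathcal{L}_{F(\cdot+z')}(R,0)$ in $z'$, for every centre $z$.

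The main obstacle will be the lower bound, and specifically the de-randomisation step. Transferring the Kac--Rice lower bound for $G$ to the deterministic $F$ \emph{uniformly in $z$} at the smallest admissible scale $s = E^{-1/2+\epsilon}$ requires a quantitative equidistribution of the phase vectors $(e(\langle z,\xi\rangle))_\xi$ at Planck scale. This is precisely where the refined arithmetic information on the distribution of lattice points on the circle $|\xi|^{2}=E$ alluded to in the abstract enters; it is technically more delicate than the corresponding global statement in \cite{BU,BW}, since the scale has to be pushed down to just above $E^{-1/2}$.
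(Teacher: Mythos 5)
Your upper bound follows the same Crofton-plus-complex-analysis route as the paper, but the step ``$F|_L$ is of exponential type $\leq 1$, hence a Bernstein/Jensen bound gives $O(\ell+1)$ zeros on an interval of length $\ell$'' is not correct as stated: an exponential sum $\sum_{j=1}^N a_j e^{i\lambda_j t}$ with real frequencies $\lambda_j\in[-1,1]$ can have as many as $N-1$ zeros in an arbitrarily short interval (e.g.\ $(e^{it/N}-1)^{N-1}$ vanishes to order $N-1$ at $t=0$), so the zero count necessarily carries an $O(N)$ term, and Jensen's bound by itself cannot be applied without first controlling the ratio of $\sup$ to local $\max$. The paper supplies both missing pieces: Lemma \ref{Jensen's bound} is combined with the Nazarov--Tur\'{a}n estimate (Lemma \ref{NT}) to give $Z \ll N + s\sqrt{E}$, and then the $N$ term is discarded using the divisor bound \eqref{divisor bound}, $N\ll E^{o(1)}$, together with $s>E^{-1/2+\epsilon}$. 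Your version of the upper bound is salvageable, but only once these two ingredients are made explicit; as written it elides the dependence on $N$, which is exactly the point that requires an argument.

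For the lower bound you propose a Kac--Rice computation for a Gaussian companion field followed by a Bourgain-type de-randomisation. This is both far heavier than necessary and unlikely to recover the full strength of the claim. De-randomisation in the style of Section \ref{BourgainS} produces statements for a density-one subset of $E\in S$, for \emph{flat} $f$, and for most centres $x$; you would still have to pass from ``most $z$'' to ``every $z$'' and from ``flat $f$'' to ``every $f$,'' and your proposed continuity-in-$z'$ argument does not close that gap at Planck scale. The paper stresses that Proposition \ref{claim} holds for \emph{every} toral eigenfunction precisely because its lower bound is elementary and deterministic: Lemma \ref{density} applies Harnack's inequality to the harmonic lift $f(x)e^{\sqrt{E}t}$ to show that $f$ vanishes somewhere in every ball of radius $c/\sqrt{E}$, and Mangoubi's inner-radius (Faber--Krahn type) bound then gives $\mathcal{H}^1\{x\in B(c/\sqrt{E},\cdot): f(x)=0\}\gg E^{-1/2}$; summing over the $O(Es^2)$ such balls that tile $B(s,z)$ finishes the proof. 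No Gaussian moments, Kac--Rice variance computation, or lattice-point equidistribution enters, and trying to run the probabilistic route here sacrifices exactly the uniformity that makes the proposition useful.
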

One particular aspect of Proposition \ref{claim} is that it holds for \textit{every} toral eigenfunction. This might fail on other surfaces:  on the  $2$-sphere $\mathbb{S}^2= \{x\in \mathbb{R}^3: ||x||^2=1\}$ one can consider the \textquotedblleft sectoral" harmonic $g(\theta,\phi)=\sin (m\phi) P^m_m(\cos(\theta))$ in spherical-coordinates, where $P^m_m(\cdot)$ is the associated Legendre polynomial. Then $\Delta g= -m(m+1) g$ and the upper-bound in Proposition \ref{claim} fails around the North Pole. 

\vspace{2mm}

\paragraph{\textit{Application to Laplace eigenfunctions on the square}} The proof of Proposition \ref{claim} is general enough that it can also address Laplace eigenfunctions on the square $[0,1]^2$ with either Dirichlet or Neumann boundary conditions. The study of  the nodal length of \textit{random} Laplace eigenfunctions on the square,  known as \textit{boundary adapted Arithmetic Random Waves}, was initiated by Cammarota, Klurman and Wigman \cite{CKW}. A major step in their work is to bound the expectation of the nodal length in squares of side $O(1)/\sqrt{E}$, where $E$ is the eigenvalue. We prove the following: 
\begin{prop}
	\label{claim2}
	Let $\tilde{f}$ be a Laplace eigenfunction on the square $[0,1]^2$ with either Dirichlet or Neumann boundary conditions and let $E$ be its eigenvalue. Then we have 
	\begin{align}
	\mathcal{L}_{\tilde{f}}(s,z)s^{-1}\ll s\sqrt{E} + N \nonumber
	\end{align}
	uniformly for $s>0$ and $z\in \mathbb{T}^2$, where $N=N(E)$ is  as in Section \ref{main results}.
\end{prop}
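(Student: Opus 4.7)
The plan is to reduce, via Crofton's formula, the nodal length of $\tilde f$ in $B(s,z)$ to an average over affine lines of the number of zeros of $\tilde f$ restricted to a chord, and then bound that zero count using the explicit trigonometric-polynomial structure of $\tilde f|_\ell$.

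First, expand $\tilde f$ in the natural Dirichlet (resp.\ Neumann) eigenbasis on $[0,1]^2$:
\begin{align*}
\tilde f(x_1,x_2)=\sum_{(m,n)\in I} b_{mn}\,\phi_m(\pi x_1)\,\phi_n(\pi x_2),
\end{align*}
with $\phi=\sin$ (resp.\ $\cos$) and $I\subseteq\{(m,n)\in\mathbb{Z}^2:m^2+n^2=E'\}$, where $E'$ is proportional to $E$; note $\#I\le N$. Crofton's formula in the plane yields
\begin{align*}
\mathcal L_{\tilde f}(s,z)=\tfrac12\int_\ell \#\bigl(\tilde f^{-1}(0)\cap\ell\cap B(s,z)\bigr)\,d\mu(\ell),
\end{align*}
where $\mu$ is the kinematic measure on affine lines. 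Only lines meeting $B(s,z)$ contribute, these have total $\mu$-measure $O(s)$, and every chord in $B(s,z)$ has length at most $2s$.

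Fix such a line $t\mapsto x_0+tv$ with $v=(b_1,b_2)$ of unit norm. Product-to-sum identities express $\tilde f|_\ell(t)$ as a real trigonometric sum
\begin{align*}
\tilde f|_\ell(t)=\sum_k a_k\cos(\omega_k t+\psi_k)
\end{align*}
with at most $2N$ terms and frequencies $|\omega_k|=\pi|mb_1\pm nb_2|\le\pi\sqrt{E'}\ll\sqrt E$. I then invoke the standard zero-counting bound: for any real trigonometric sum with $K$ terms and maximum frequency $M$, the number of real zeros in an interval of length $L$ is $O(LM+K)$. This can be obtained from Jensen's formula applied to the entire extension of $p$, which has exponential type $O(M)$, after choosing a centre $t_0$ with $|p(t_0)|$ comparable to $\|a\|_\infty$. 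Applied here with $L\le 2s$, $M\ll\sqrt E$ and $K\ll N$, and integrated against the $O(s)$-measure of contributing lines,
\begin{align*}
\mathcal L_{\tilde f}(s,z)\ll s(s\sqrt E+N),
\end{align*}
which gives the claim after dividing by $s$.

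The main obstacle is the zero-counting step. The frequencies $\pi(mb_1\pm nb_2)$ are generically incommensurable real numbers, so the substitution $z=e^{it}$ does \emph{not} reduce $\tilde f|_\ell$ to an algebraic polynomial whose zeros can be counted by degree; one must instead argue on the complex extension of $p$. The $O(LM)$ contribution captures the regime above Planck scale, dominated by the effective \emph{degree} $\sqrt E$, while the $O(K)$ contribution records the unavoidable overhead when $s$ is below Planck scale: in a very short interval each distinct frequency can contribute its own isolated zero. Together these two regimes produce exactly the two-term bound $s\sqrt E+N$.
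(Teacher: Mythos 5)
Your proposal is correct and follows essentially the same route as the paper: convert the Dirichlet/Neumann eigenfunction into an exponential (trigonometric) sum with at most $O(N)$ terms via product-to-sum identities, apply Crofton's formula to reduce the nodal length to a line-average of zero counts, and then bound the number of zeros of an exponential polynomial with $K$ terms of maximal frequency $M$ on an interval of length $L$ by $O(LM + K)$ — exactly what Jensen's bound combined with the Nazarov--Tur\'{a}n (Remez-type) lemma gives, which is how the paper carries out the step you refer to as ``choosing a centre $t_0$ with $|p(t_0)|$ comparable to $\|a\|_\infty$''. The only cosmetic difference is that you work in unrescaled coordinates, absorbing the $O(s)$ kinematic measure of contributing lines into a final division by $s$, whereas the paper rescales to the unit ball first.
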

From Proposition \ref{claim2}, it follows that for any fixed $C>0$ we have 
\begin{align}
\mathcal{H}^1\{x\in B(C/\sqrt{E},z): \tilde{f}(x)=0 \}\ll \frac{N}{\sqrt{E}}. \label{1.1}
\end{align}
For \textit{random} Laplace eigenfunctions on the square, Cammarota, Klurman and Wigman \cite[Proposition 2.5]{CKW} showed that the bound	$\mathcal{H}^1\{x\in B(C/\sqrt{E},z): \tilde{f}(x)=0 \}\ll N^2/\sqrt{E}$ holds with high probability. So \eqref{1.1} not only refines \cite[Proposition 2.5]{CKW} but it also provides a deterministic results which does not rely on moments estimates. 
\subsection{Bourgain's de-randomisation in shrinking sets}
Another novel aspect in the proof of Theorem \ref{theorem 3} is an extension of Bourgain's de-randomisation technique to shrinking sets. Let $f$ be as in \eqref{function} and suppose that $a_{\xi}=1$ for all $\xi's$, moreover let $F_x(y)= f(x+y/\sqrt{E})$ for $y\in [-1/2,1/2]^2$.  Bourgain \cite{BU} showed that the assemble $\{F_x\}$, where $x$ is drawn uniformly at random from $\mathbb{T}^2$, approximates the Gaussian field with spectral measure the Lebesgue measure on $\mathbb{S}^1$ (see Section \ref{Gaussian random fields} below for some background on Gaussian fields). We use some new informations about sum of lattice points called \textit{quasi-spectral correlations} to show that this approximations still holds even when $x$ is drawn uniformly at random from $B(s,z)$ for $s>E^{-1/2+o(1)}$ and $z\in \mathbb{T}^2$, Proposition \ref{main prop} below. 

Since the proof of Proposition \ref{main prop} is quite technical, to give the reader an idea of how such properties of lattice points are exploited, we show here that $F_x(0)=f(x)$ approximates  a standard Gaussian random variable when $x$ is drawn uniformly at random from $B(s,z)$. Via the method of moments, we have to evaluate for $l \in \mathbb{N}$ 
\begin{align}
\frac{1}{\pi s^2}	\int_{B(s,z)} |F_x(0)|^{2l} dx&=\frac{1}{\pi s^2} \sum_{\xi_1,...,\xi_{2l}} \int_{B(s,z)}e(\langle x, \xi_1-\xi_2+...+\xi_{2l-1}-\xi_{2l}\rangle) dx .\nonumber
\end{align}
Separating the terms with $\xi_1 +...-\xi_{2l}=0$, known as\textquotedblleft$2l$-spectral correlations", from the other terms,  \textquotedblleft $2l$ spectral quasi-correlations" , we obtain 
\begin{align}
\frac{1}{\pi s^2}	\int_{B(s,z)} |f(x)|^{2l} dx= \sum_{\xi_1- \xi_2+...-\xi_{2l}=0}1 
+O\left( \sum_{|\xi_1- \xi_2+...-\xi_{2l}|>0} \frac{J_1(s|\xi_1- \xi_2+...-\xi_{2l}|)}{s|\xi_1- \xi_2+...-\xi_{2l}|} \right) \label{intro3}
\end{align} 
where $J_1(\cdot)$ is the Bessel function of the first kind. 

The main contribution to the first term in \eqref{intro3} comes from the diagonal solutions $\xi_1=\xi_2$,..., $\xi_{2l-1}=\xi_{2l}$ and their permutations, these contribute $2l!/(2^l\cdot l!)$.  Bombieri and Bourgain \cite{BB} showed that, for \textit{generic} $E\in S$, the \textquotedblleft off-diagonal" solutions have lower order as $N\rightarrow \infty$. Thus, the first term on the right hand side of \eqref{intro3} is asymptotic to $2l!/(2^l\cdot l!)$. We are left to show that the second term on the right hand side of \eqref{intro3} tends to $0$ as $N\rightarrow \infty$. Theorem \ref{semi} below implies that for \textit{generic} $E\in S$,   $s|\xi_1- \xi_2+...-\xi_{2l}|\geq E^{o(1)}$. Since Bessel functions decay at infinity, this implies that the second term in \eqref{intro3} tends to $0$, as required.
\subsection{Related results}The main body of results regarding statistics of Laplace eigenfunctions in shrinking sets concern their mass distribution. Let $f_E$ be a Laplace eigenfunction on a surface $M$,  then one is interested in finding the smallest $s$ such that $\int_{B(s,z)} |f|^2 d\vol= \pi s^2(1+o_{E\rightarrow \infty}(1))$. The celebrated Quantum Ergodicity Theorem \cite{DV,S,Z} asserts that, if the geodesic flow on M is ergodic, then one can take any \textit{fixed} $s>0$ for a density one subsequence of eigenfunctions.   Luo and Sarnak  \cite{LS} showed that, on the modular surface, one can take $s>E^{-\alpha}$ for some $\alpha>0$ for a density one subsequence, see also \cite{Y}. Hezari, Rivi\`{e}re \cite{HR}  and independently Han \cite{H} proved that, if $M$ has negative sectional curvature, then one can take $s> \log(E)^{-\alpha}$ for some small $\alpha>0$ for a density one subsequence. On $\mathbb{T}^2$ Lester and Rudnick \cite{LR} showed that $s> E^{-1/2+ o(1)}$, again for a density one subsequence.  

Granville and Wigman \cite{GW} and subsequently  Wigman and Yesha \cite{WY} studied the mass distribution of eigenfunctions on $\mathbb{T}^2$ \emph{at Planck scale} by drawing the centre of the ball randomly uniformly. They showed that, for certain eigenfunctions  the mass equidistributes in almost every ball, see also \cite{HU,HU1} for similar work on the modular surface. The author \cite{SA1} classified all limiting mass-distributions \emph{at Planck scale}  for \textquotedblleft generic" toral eigenfunctions. 

Results regarding the zero set are more modest: Benatar,  Marinucci and Wigman \cite{BMW} studied the behaviour of nodal length for \emph{random} toral eigenfunctions at scales $s=E^{-1/2+o(1)}$ and found the asymptotic law of the variance. 
To the best of the author's knowledge, our own Theorem \ref{theorem 3} is the only asymptotic result on nodal domains at small scales.  

\subsection{Notation}
Let $t \rightarrow \infty$ be some parameter, we say that the quantity $X=X(t)$ and $Y=Y(t)$   satisfy $X\ll Y$ , $X\gg Y$ if there exists some constant $C$, independent of $t$, such that $X\leq C Y$ and $X\geq CY$ respectively. We also write $O(X)$ for some quantity bounded in absolute value by a constant times $X$ and $X=o(Y)$ if $X/Y\rightarrow 0$ as $t\rightarrow \infty$, in particular we denote by $o(1)$ any function that tends to $0$ (arbitrarily slowly) as $x\rightarrow \infty$. We denote by $B(s,z)$ the (open) ball of radius $s$ with centre $z$, by $B(s)$ for the ball centred at $0$ and by $\overline{B}(s)$ the closure of $B(s)$. When the specific radius is unimportant, we simply write the ball as $B$ and $\frac{1}{2}B$ for the concentric ball with half the radius. Finally, we denote by $\Omega$ an abstract probability space where every random object is defined.  

\section{Preliminaries}
\subsection{Number theoretic background}
\label{NTpre}
Recall that $S=\{n\in \mathbb{N}: n= a^2+b^2 , \text{ for some} \hspace{1mm} \\ a,b \in \mathbb{Z}\}$. In this section we collect some number theoretic results that will be used to define the set $S'\subset S$ in Theorem \ref{theorem 3}. Let $E\in S$ and write its prime factorisation as $E= \prod_{p\equiv 1 \pmod 4}p ^{\alpha_p}\prod_{q\equiv 3 \pmod 4} q^{2\beta_q}$ where $\alpha_p,\beta_q\in \mathbb{N}$. It follows that $N(E)=4 \prod_{p\equiv 1 \pmod 4}(\alpha_p+1)$. Thus, by the divisor bound, we have
\begin{align}
N(E) \ll \exp \left( \frac{\log E}{\log\log E}\right). \label{divisor bound}
\end{align}
 Moreover, by the Erd\"{o}s-Kac Theorem \cite[Theorem 12.3]{E}, for almost all integers (representable as sum of two squares) the number $\#\{ p|E: p\equiv 1 \pmod 4\}\rightarrow \infty$ as $E\rightarrow \infty$.  So we also have the following lemma:  
\begin{lem}
	\label{N infinity}
	For a density one subset of $E\in S$, $N(E) \rightarrow \infty$ as $E\rightarrow \infty$.  
\end{lem}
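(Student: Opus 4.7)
The plan is to deduce the lemma directly from the explicit formula for $N$ together with the Erdős–Kac statement recorded in the preceding paragraph. From
$$N(E)=4\prod_{\substack{p\mid E\\ p\equiv 1\pmod 4}}(\alpha_p+1)$$
and the trivial bound $\alpha_p+1\geq 2$, one has $N(E)\geq 4\cdot 2^{\omega_1(E)}$, where $\omega_1(E):=\#\{p\equiv 1\pmod 4:p\mid E\}$. Hence it suffices to produce a density-one subset $S'\subset S$ on which $\omega_1(E)\to\infty$. But this is exactly the Erdős–Kac input cited from \cite[Theorem 12.3]{E}, applied to the strongly additive function $\omega_1$: for a density-one subset of $E\in S$, $\omega_1(E)\to\infty$. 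Taking $S'$ to be this set and using $N(E)\geq 4\cdot 2^{\omega_1(E)}$ then completes the proof.

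If one prefers a self-contained argument, I would first write each $E\in S$ uniquely as $E=a\cdot c^{2}$ with $a$ supported on primes in $\{2\}\cup\{p\equiv 1\pmod 4\}$ and $c$ supported on primes $\equiv 3\pmod 4$; note $\omega_1(E)=\omega_1(a)$. The Landau–Selberg–Delange method applied to the set of such $a$ yields, for any fixed $K\geq 0$,
$$\#\{a\leq Y:\omega_1(a)\leq K\}\ll_K \frac{Y(\log\log Y)^{K}}{\log Y}.$$
Summing over $c^2\leq X$ using $\sum_{c}c^{-2}<\infty$ gives
$$\#\{E\leq X:E\in S,\ \omega_1(E)\leq K\}\ll_K \frac{X(\log\log X)^{K}}{\log X},$$
and comparing with the Landau–Ramanujan asymptotic $\#\{E\leq X:E\in S\}\asymp X/\sqrt{\log X}$ shows that the exceptional density within $S$ is $O\bigl((\log\log X)^{K}/\sqrt{\log X}\bigr)=o(1)$. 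Letting $K=K(X)\to\infty$ slowly finishes the argument.

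The only point requiring genuine care — and thus the main obstacle — is the quantitative nature of the Erdős–Kac input: the soft Turán–Kubilius bound $\#\{n\leq X:\omega_1(n)\leq K\}=o(X)$ is not by itself enough, because the ambient set $S$ is already a factor $\sqrt{\log X}$ thinner than $\mathbb{N}$. The Landau–Selberg–Delange refinement above (or, equivalently, the version of Erdős–Kac in Elliott's book that is adapted to restricted sequences) is what closes the gap.
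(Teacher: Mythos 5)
Your first paragraph reproduces the paper's argument exactly: the lemma is obtained from the Erd\H{o}s--Kac theorem for the sequence $S$ (Elliott's Theorem~12.3, which the paper cites with the parenthetical ``representable as sum of two squares'') combined with $N(E)=4\prod_{p\equiv 1\pmod 4}(\alpha_p+1)\geq 4\cdot 2^{\omega_1(E)}$. Your remaining paragraphs go beyond the paper, but the observation they contain is correct and worth stressing: a soft Erd\H{o}s--Kac or Tur\'an--Kubilius bound over all of $\mathbb{N}$ would not by itself suffice, since $S$ has density $\asymp 1/\sqrt{\log X}$, and this is precisely why one needs the restricted-sequence version from Elliott; your Landau--Selberg--Delange computation (counting $a\leq Y$ supported on $\{2\}\cup\{p\equiv 1\pmod 4\}$ with $\omega_1(a)\leq K$, then summing over the $3\pmod 4$ square part and comparing against Landau--Ramanujan) is a valid self-contained substitute.
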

 To state the next results we need some notation: let  $l\in \mathbb{N}$ and $E\in S$,  denote by $\mathcal{S}(l,E)$ the number of solutions to 
\begin{align}
\xi_1+...+ \xi_l=0 \label{14}
\end{align}
where $\xi_j\in \mathbb{Z}^2$ and $|\xi_j|^2=E$, that is \textit{$l$-spectral correlations}. When $l$ is odd, by congruence obstruction modulo $2$, there are no solutions to \eqref{14}. When $l$ is even, we have the following \cite[Theorem 17]{BB} and \cite[Lemma 4]{BU}:  

\begin{thm}[Bombieri-Bourgain]
	\label{BB}
Let $B=B(E)$ be an arbitrarily slow growing function of $E$, $l\in \mathbb{N}$ and $0<\gamma<1$. Then, for a density one subset of integers $E\in S$, we have 
\begin{align}
	\mathcal{S}(2l,E)= \frac{(2l)!}{2^l \cdot l!}N^{l} +O(N^{\gamma l}) \nonumber
\end{align}
uniformly for all $l\leq B$, where the constant implied in the notation is absolute. 
\end{thm}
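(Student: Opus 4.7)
I would expand $\mathcal{S}(2l,E)$ combinatorially and isolate the ``diagonal'' contribution. A tuple $(\xi_1,\ldots,\xi_{2l})$ with $\sum_i\xi_i=0$ and $|\xi_i|^2=E$ is called diagonal if $\{1,\ldots,2l\}$ admits a perfect matching $\{i_k,j_k\}_{k=1}^l$ with $\xi_{j_k}=-\xi_{i_k}$ on each pair. There are $(2l)!/(2^l l!)$ such matchings, and given a matching one has $N^l$ free choices of the representative vectors, producing the main term $\frac{(2l)!}{2^l l!}N^l$. A single tuple may be diagonal with respect to several matchings, but the resulting over-counting is dominated by the off-diagonal estimate below.

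The remaining task is an off-diagonal bound $\mathcal{S}^{\mathrm{od}}(2l,E)\ll N^{\gamma l}$ uniform in $l\le B$ on a density-one subset of $E\in S$. The base case is the $4$-correlation estimate $\mathcal{S}^{\mathrm{od}}(4,E)=O(N^\gamma)$: a non-trivial identity $\xi_1+\xi_2=\xi_3+\xi_4$ between lattice points on $|\xi|^2=E$ can be translated, via the Gaussian-integer factorisation $E=\prod p^{\alpha_p}\prod q^{2\beta_q}$, into a multiplicative divisor identity in $\mathbb{Z}[i]$; a divisor-counting argument shows such relations are rare provided $E$ has many prime factors $p\equiv 1\pmod 4$, which holds on a density-one set by the Erd\H{o}s--Kac-type input used in Lemma \ref{N infinity}.

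For higher $l$, I would induct on $l$. Given an off-diagonal $2l$-tuple, locate a minimal off-diagonal sub-block of even length $2k\ge 4$; apply the base case (or the inductive hypothesis) to the sub-block, and bound the remaining $2(l-k)$ coordinates by either the trivial count $N^{l-k}$ or a further application of the induction. Summing over the $l^{O(1)}$ placements of the bad block and the $O(1)$ values of $k$ yields, after a slight worsening of $\gamma$, the claimed bound $\mathcal{S}^{\mathrm{od}}(2l,E)\ll N^{\gamma l}$.

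The principal obstacle is \emph{uniformity in $l$}: the density-one exceptional set must be chosen once and for all $l\le B$, while the combinatorial factor $l^{O(l)}$ coming from the sub-block decomposition threatens to overwhelm the savings $N^{(1-\gamma)l}$. Because $B$ grows arbitrarily slowly and the divisor bound \eqref{divisor bound} forces $N(E)$ not to be too small on the good set (and $N\to\infty$ by Lemma \ref{N infinity}), one can slightly shrink $\gamma$ to absorb $l^{O(l)}$ into $N^{(1-\gamma')l}$; nesting the exceptional sets arising at each $l\le B$ then keeps their union density-zero, since $B=o(\log\log E)$ is tolerated by the divisor-bound control.
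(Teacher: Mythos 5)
This theorem is not proved in the paper; the paper simply cites it (as noted immediately before the statement) from Bombieri--Bourgain \cite[Theorem 17]{BB} and Bourgain \cite[Lemma 4]{BU}. Thus there is no ``paper's proof'' to compare against, but your blind attempt has concrete gaps that I should flag.

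First, your proposed base case is misconceived. The off-diagonal $4$-correlation count is not ``rare on a density-one set''; it is \emph{identically zero for every} $E$. If $\xi_1+\xi_2+\xi_3+\xi_4=0$ with all $|\xi_i|^2=E$, set $v=\xi_1+\xi_2=-(\xi_3+\xi_4)$: if $v=0$ the tuple is trivially diagonal, and if $v\neq 0$ then $\{\xi_1,\xi_2\}$ and $\{-\xi_3,-\xi_4\}$ are both the intersection of the two circles $|\xi|^2=E$ and $|\xi-v|^2=E$, which is a set of at most two points, so $\{\xi_1,\xi_2\}=\{-\xi_3,-\xi_4\}$ and the tuple is again diagonal. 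No Gaussian-integer divisor argument is needed at $l=2$; the genuine arithmetic difficulty begins at $l=3$ (six-fold correlations), where off-diagonal solutions do exist and must be controlled.

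Second, and more seriously, the inductive step does not close. You propose to locate an off-diagonal sub-block and apply the inductive hypothesis to it, estimating the complementary coordinates trivially. But the estimate you would get from a sub-block is only useful if that sub-block itself sums to zero (so that the remaining coordinates also sum to zero and the count factors). The hard tuples are precisely the \emph{irreducible} ones, in which no proper sub-sum vanishes; for these your decomposition produces nothing, since the trivial bound $N^{l-k}$ on the complement plus an empty constraint on the sub-block gives back $O(N^l)$, with no saving at all. Bounding the number of irreducible zero-sum tuples of length $2l$ on the circle $|\xi|^2=E$ is exactly the content of \cite[Theorem 17]{BB}, and it is obtained via delicate estimates on Gaussian-integer divisors and a careful choice of the density-one exceptional set, not by induction on $l$. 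Finally, the remark about the over-count of diagonal tuples (a single tuple counted by several matchings) being ``dominated by the off-diagonal estimate'' is hand-waved: that over-count is of order $N^{l-1}$, which for small $l$ and small $\gamma$ is not obviously $O(N^{\gamma l})$, so the exact form of the error term deserves more care than you give it.
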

Moreover, provided that is not zero, one can give a quantitative lower bound to the sum in \eqref{14}, see \cite[Theorem 1.4]{BMW} and the refinement \cite[Theorem 1.1]{SA}:

\begin{thm}
	\label{semi}
Let $B=B(E)$ be an arbitrarily slow growing function of $E$, $l\in \mathbb{N}$ and $Q=Q(E)$ be a function such that $Q \cdot E^{1/2}/(\log E)^m\rightarrow \infty$ for all $m\geq 0$. Then, for a density one subset of integers $E\in S$, we have 
\begin{align}
	 ||\xi_1+...+\xi_l||>Q\nonumber
\end{align}
uniformly for all choices of $\xi_1,...,\xi_l$ and $l\leq B$. 
\end{thm}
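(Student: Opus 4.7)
The plan is to establish Theorem \ref{semi} by a Markov-type first-moment bound combined with a diagonal argument. Since $B$ and $Q$ are both allowed to vary arbitrarily slowly, it suffices to fix $l\in\mathbb{N}$ and $m\ge 0$ and to show that the exceptional set
\begin{align}
\mathcal{E}_{l,m}(X):=\bigl\{E\in S\cap[1,X]:\exists\,\xi_1,\dots,\xi_l\in\mathbb{Z}^2,\ |\xi_j|^2=E,\ 0<|\xi_1+\cdots+\xi_l|\le(\log X)^m\bigr\}\nonumber
\end{align}
has cardinality $o(|S\cap[1,X]|)$; the density-one set $S'\subset S$ is then extracted by a countable diagonal intersection over $l$ and $m$.

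For the main estimate I apply Markov's inequality and swap the order of summation:
\begin{align}
|\mathcal{E}_{l,m}(X)|\le\sum_{E\in S\cap[1,X]}T_l(E)=\sum_{0<|\eta|\le(\log X)^m}M_\eta(l,X),\nonumber
\end{align}
where $T_l(E):=\sum_{0<|\eta|\le(\log X)^m}\mathcal{S}_\eta(l,E)$, $\mathcal{S}_\eta(l,E):=\#\{(\xi_1,\dots,\xi_l):|\xi_j|^2=E,\ \xi_1+\cdots+\xi_l=\eta\}$, and $M_\eta(l,X):=\#\{(\xi_1,\dots,\xi_l)\in(\mathbb{Z}^2)^l:|\xi_1|^2=\cdots=|\xi_l|^2\le X,\ \sum_j\xi_j=\eta\}$. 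Since there are $O((\log X)^{2m})$ admissible $\eta$ and $|S\cap[1,X]|\asymp X/\sqrt{\log X}$ by Landau's theorem, the problem reduces to the uniform upper bound $M_\eta(l,X)=o\bigl(X(\log X)^{-2m-1/2}\bigr)$.

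This last estimate is the heart of the matter and the main obstacle. A naive dimension count gives only $M_\eta(l,X)\ll X^{(l-1)/2}$, which is hopeless for $l\ge 3$, so one has to exploit the arithmetic of the lattice points on the circle $|\xi|^2=E$. My plan is to identify each $\xi_j$ with a Gaussian integer $z_j\in\mathbb{Z}[i]$ of norm $E$, so that, once the factorisation $E=\prod_{p\equiv 1\,(4)}p^{\alpha_p}\prod_{q\equiv 3\,(4)}q^{2\beta_q}$ is fixed, one has $z_j=u_j\prod_p\pi_p^{a_{j,p}}\overline{\pi_p}^{\alpha_p-a_{j,p}}\prod_q q^{\beta_q}$ with $u_j$ a unit and $0\le a_{j,p}\le\alpha_p$. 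The constraint $z_1+\cdots+z_l=\eta$ then becomes a structured vanishing sum in $\mathbb{Z}[i]$ supported on the Gaussian primes dividing $E$, and subspace-theorem-type rigidity combined with the Erd\"os-Kac abundance of prime factors for typical $E\in S$ (cf.\ Lemma \ref{N infinity}) should produce the required saving over the trivial count. Making this saving uniform in the growing parameter $l\le B(E)$ and in the target $\eta$ is the principal technical difficulty, and is essentially the content of \cite{BMW} and its refinement \cite{SA}.
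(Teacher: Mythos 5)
The paper does not prove Theorem~\ref{semi}: it is cited verbatim from \cite[Theorem~1.4]{BMW} and the refinement \cite[Theorem~1.1]{SA}, so there is no in-paper argument to compare against. That said, your first-moment/Markov set-up (bound the exceptional set by a double sum, swap the order, reduce to a counting estimate, diagonalise over the discrete parameters) is indeed the general strategy in \cite{BMW}, so the skeleton is sound.

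There is however a substantive gap in the reduction: the threshold $(\log X)^m$ in your exceptional set is far too small, and as a result the statement you would prove is much weaker than the one the paper needs. In the proof of Lemma~\ref{first approx} the theorem is invoked to conclude $|\xi-\xi'|\gg E^{1/2-2\epsilon}$, i.e.\ the lower bound $Q$ on $\|\xi_1+\cdots+\xi_l\|$ must be taken polynomially close to $\sqrt E$ (and with the \cite{SA} refinement, as large as $E^{1/2}/(\log E)^{m}$). The condition $Q\cdot E^{1/2}/(\log E)^m\to\infty$ as printed is almost certainly a typo for a bound of the shape $E^{1/2}/(Q\,(\log E)^m)\to\infty$: read literally it is vacuous for $Q\ge 1$ and false for $Q\gg\sqrt E$. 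With the correct scale, your exceptional set should be
$\mathcal{E}_{l,\epsilon}(X)=\{E\le X:\ \exists\,\xi_j,\ 0<\|\xi_1+\cdots+\xi_l\|\le E^{1/2-\epsilon}\}$
(or with a $\mathrm{polylog}$ loss), and the swap of summation now ranges over $\sim X^{1-2\epsilon}$ targets $\eta$, not over $O((\log X)^{2m})$ of them; the required per-$\eta$ bound changes accordingly (for $l=2$ one needs to exploit that $M_\eta(2,X)$ decays like $\sqrt X/\|\eta\|$, not a uniform bound). With your threshold, diagonalising over $m$ only yields $\|\sum\xi_j\|>(\log E)^{m(E)}$ for some slowly growing $m(E)$, which is nowhere near sufficient for the application. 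Finally, the sketch of the hard estimate (Gaussian integer factorisation plus ``subspace-theorem-type rigidity'' plus Erd\H{o}s--Kac) is not an argument; it is a pointer to the same references the paper cites, so it does not constitute an independent proof, only a plan. If the intent was to rederive \cite[Theorem~1.4]{BMW}, the key missing ingredient is the actual spectral quasi-correlation count, which there proceeds via a careful analysis of vanishing sums of Gaussian-integer units together with an averaging over the representation data, not via a generic first-moment bound with a uniform $M_\eta$ estimate.
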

\textbf{The set $S'$}. We are now ready to define the subset in Theorem \ref{theorem 3}: let $S'$ be the set of $E\in S$ which satisfy the conclusion of Lemma \ref{N infinity},  Theorem \ref{BB} and Theorem \ref{semi}. By the discussion in this section, $S'$ has density one. 
\subsection{Gaussian fields background}
\label{Gaussian random fields}
We  briefly collect some definitions about Gaussian fields (on $\mathbb{R}^2$). A (real-valued) Gaussian field $F$ is a measurable map $F: \mathbb{R}^2 \times \Omega\rightarrow \mathbb{R}$ for some probability space  $\Omega$,  such that all  finite dimensional distributions $(F(x_1, \cdot),...F(x_n,\cdot))$ are multivariate Gaussian. $F$ is \textit{centred} if $\mathbb{E}[F]=0$ and \textit{stationary} if its law is invariant under translations $x\rightarrow x+\tau$ for $\tau \in \mathbb{R}^2$. The \textit{covariance} function of $F$ is 
\begin{align}
\mathbb{E}[F(x)\cdot F(y)]= \mathbb{E}[F(x-y)\cdot F(0)]. \nonumber
\end{align}
Since the covariance is positive definite, by Bochner's theorem, it is the Fourier transform of some measure $\mu$ on the $\mathbb{R}^2$. So we have 
\begin{align}
\mathbb{E}[F(x)F(y)]= \int_{\mathbb{R}^2} e\left(\langle x-y, \lambda \rangle\right)d\mu(\lambda). \nonumber
\end{align}
The measure $\mu$ is called the \textit{spectral measure} of $F$ and, since $F$ is real-valued, satisfies $\mu(-I)=\mu(I)$ for any (measurable) subset $I\subset \mathbb{R}^2$. By Kolmogorov theorem, $\mu$ fully determines $F$, so we may simply write $F=F_{\mu}$.  

\subsection{Nazarov-Sodin constant}
Nazarov and Sodin \cite{NS} found the asymptotic law of the expected number of nodal domains of a stationary Gaussian field in growing balls around the origin, provided its spectral measure satisfies certain (simple) properties. We state here a simplified and slightly adapted form of their Theorem, see \cite[Proposition 1.1]{KW2}: 

\begin{thm}
	\label{Nazarov-Sodin}
Let $\mu$ be a probability measure on $\mathbb{S}^1$, invariant by rotation by $\pi$ and let $\mathcal{N}(F_\mu,R)$ be the number of nodal domains of $F_{\mu}$ in a ball of radius $R>0$ centred ad the origin. Then, there exists some constant $c_{NS}(\mu)$ such that 
\begin{align}
	\mathbb{E}[ \mathcal{N}(F_{\mu},R) ]= c_{NS}(\mu)R^2 + O\left(R \right). \nonumber
\end{align}	
Moreover $c_{NS}(\mu)>0$ if $\mu$ does not have any atoms. 
\end{thm}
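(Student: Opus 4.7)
The plan is to reduce the problem to the integral-geometric sandwich of Nazarov and Sodin. For each $x\in\mathbb{R}^2$ and $r>0$, let $N^{\ast}(F_{\mu},x,r)$ denote the number of nodal components of $F_\mu$ entirely contained in $B(x,r)$, and set $\mathfrak{n}(r):=\mathbb{E}[N^{\ast}(F_\mu,0,r)]$. Stationarity gives $\mathbb{E}[N^{\ast}(F_\mu,x,r)]=\mathfrak{n}(r)$ for every $x$, while a Kac--Rice bound on the number of critical points of $F_\mu$ yields the a priori estimate $\mathfrak{n}(r)\ll r^2$. Fubini then produces the key identity
\begin{equation*}
\int_{B(0,R)}N^{\ast}(F_\mu,x,r)\,dx=\sum_{D}\bigl|\{x\in B(0,R):D\subset B(x,r)\}\bigr|,
\end{equation*}
where the sum runs over nodal components $D$ of $F_\mu$ meeting $B(0,R+r)$, and each summand vanishes once $\mathrm{diam}(D)>2r$.

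First I would establish existence of the limit $c_{NS}(\mu):=\lim_{r\to\infty}\mathfrak{n}(r)/(\pi r^2)$ by a near sub/super-additivity argument: tiling $B(0,r_1+r_2)$ by essentially disjoint balls of radius $r_1$, every component contained in a sub-ball is automatically contained in the larger ball, and components straddling tile boundaries are controlled by a perimeter estimate, giving a Cauchy criterion for $\mathfrak{n}(r)/r^2$. The sandwich is then read off from the identity above: taking expectations and combining both sides gives
\begin{equation*}
\pi(R-r)^{2}\,\mathfrak{n}(r)-O(R\,r)\leq \pi r^{2}\,\mathbb{E}[\mathcal{N}(F_\mu,R)]\leq \pi(R+r)^{2}\,\mathfrak{n}(r)+O(R\,r),
\end{equation*}
where the error absorbs components of diameter at most $2r$ meeting an annular strip of width $r$ near $\partial B(0,R)$, whose expected number is again bounded by Kac--Rice. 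Dividing by $\pi r^2$ and optimising $r$ (for instance $r\sim R^{1/2}$) converts the two-sided bound into $\mathbb{E}[\mathcal{N}(F_\mu,R)]=c_{NS}(\mu)R^{2}+O(R)$.

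For the positivity claim when $\mu$ has no atoms, I would construct a \emph{barrier event}. Fix a smooth $h\colon B(0,1)\to\mathbb{R}$ having a strict interior nodal component (for instance a compactly supported bump surrounded by a region of opposite sign). When $\mu$ is atomless, the reproducing kernel Hilbert space of $F_\mu$ is $C^{1}$-dense on compact sets (this is the only point where the no-atoms hypothesis is used, and it comes from a Fourier-analytic density statement on $\mathbb{S}^1$), so the event $\mathcal{A}=\{\|F_\mu-h\|_{C^{1}(B(0,1))}<\varepsilon\}$ has positive probability $p>0$ for $\varepsilon$ sufficiently small. On $\mathcal{A}$ the field $F_\mu$ inherits a nodal component strictly inside $B(0,1)$, giving $\mathfrak{n}(1)\geq p$, and the subadditivity from the previous step forces $c_{NS}(\mu)\geq p>0$.

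The main technical obstacle is obtaining the sharp $O(R)$ remainder rather than a soft $o(R^{2})$ one: this demands quantitative control of large nodal components via Kac--Rice estimates for the critical points of the stationary Gaussian field on $\mathbb{R}^2$. These estimates require mild regularity of the covariance $\widehat{\mu}$, in particular uniform non-degeneracy of the gradient covariance matrix, which is available here because $\mu$ lives on the unit circle and is symmetric under rotation by $\pi$. Closing the loop between the quantitative sandwich and the positivity barrier is the heart of the argument.
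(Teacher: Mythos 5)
The paper does not prove this statement: it is quoted from Nazarov--Sodin \cite{NS}, in the refined form of Kurlberg--Wigman \cite[Proposition 1.1]{KW2}, so there is no internal argument to compare against. Your reconstruction via the integral-geometric sandwich is the right framework, but two of the steps are broken.

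Your barrier argument for positivity fails at the density claim. Every realisation of $F_\mu$ solves the Helmholtz equation $\Delta F_\mu + 4\pi^2 F_\mu = 0$ because the spectral measure is supported on the unit circle. A compactly supported bump is not a Helmholtz solution, so the event $\{\|F_\mu - h\|_{C^1(B(0,1))}<\varepsilon\}$ has probability zero, not positive probability, for your choice of $h$ and small $\varepsilon$. In particular the Cameron--Martin space cannot be $C^1$-dense on compacts: its $C^1$-closure is contained in the closed, proper, and hence nowhere-dense subspace of Helmholtz solutions. The correct barrier must itself be a Helmholtz solution with a compact nodal domain (a perturbed Bessel profile $J_0(2\pi|x|)$ is the model case), and the relevant spectral condition guaranteeing that such a function lies in the topological support of the Gaussian measure is that $\mathrm{supp}(\mu)$ is not contained in two antipodal pairs on $\mathbb{S}^1$; the atomless hypothesis implies this, but it is this weaker condition, not general $C^1$-density, that must be used, cf.\ \cite[Theorem 1.5]{KW2}.

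The error analysis in the sandwich is also off. The defect between $\int_{B(0,R)}N^*(F_\mu,x,r)\,dx$ and $\pi r^2\,\mathcal{N}(F_\mu,R)$ is of size $O(rR^2)$, not $O(Rr)$: a component of diameter $d<r$ contributes $\pi r^2-O(rd)$ to the integral, and the sum of $d$ over components inside $B(0,R)$ is controlled by the expected nodal length, which is of order $R^2$; components of diameter $\geq r$ are handled the same way and number $O(R^2/r)$. Writing $\nu(\rho)=\mathbb{E}[\mathcal{N}(F_\mu,\rho)]/(\pi\rho^2)$, the correctly normalised sandwich reads
\begin{align*}
\frac{(R-r)^2}{R^2}\,\nu(r)\;\leq\;\nu(R)\;\leq\;\frac{(R+r)^2}{R^2}\,\nu(r)+O\!\left(\frac{1}{r}\right),
\end{align*}
and sending $R\to\infty$ with $r$ \emph{fixed} yields $c_{NS}(\mu)-O(1/r)\leq\nu(r)\leq c_{NS}(\mu)$, whence $\mathbb{E}[\mathcal{N}(F_\mu,r)]=c_{NS}(\mu)\pi r^2+O(r)$ upon renaming $r$ as $R$. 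There is no optimisation over a scale $r=r(R)$; with your choice $r\sim R^{1/2}$ the term $c_{NS}(\mu)\bigl((R\pm r)^2-R^2\bigr)$ alone is already of size $R^{3/2}$, which is worse than the claimed $O(R)$.
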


We will need the following version of Theorem \ref{Nazarov-Sodin}, see  \cite[Proposition 3.4 and Proposition 3.5]{BW}. 
\begin{prop}
	\label{stability}
	Let $R>1$ and $\mu_{f}$ be as in \eqref{spectral measure}. Then, for any function $\psi$ with $||\psi||_{\mathcal{C}^1}$ sufficiently small in terms of $R$, we have 
	\begin{align}
	\mathbb{E}[\mathcal{N}( F_{\mu_{f}} +\psi,R)]= c_{NS}(\mu_{f})R^2 (1+o(1)) \hspace{8mm} \text{as} \hspace{2mm} R\rightarrow \infty.\nonumber
	\end{align}
\end{prop}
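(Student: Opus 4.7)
The plan is to reduce the claim to Theorem~\ref{Nazarov-Sodin}, applied to the (atomic, $\pi$-rotation invariant) spectral measure $\mu_{f}$, combined with a deterministic stability statement showing that a sufficiently small $C^{1}$ perturbation cannot change the nodal domain count in $B(R)$ by more than a lower-order term. Since Theorem~\ref{Nazarov-Sodin} directly gives
\[
\mathbb{E}[\mathcal{N}(F_{\mu_{f}}, R)] = c_{NS}(\mu_{f})\,R^{2} + O(R),
\]
it suffices to prove
\[
\bigl|\mathbb{E}[\mathcal{N}(F_{\mu_{f}}+\psi, R)] - \mathbb{E}[\mathcal{N}(F_{\mu_{f}}, R)]\bigr| = o(R^{2})
\]
whenever $\|\psi\|_{C^{1}}$ is small enough in terms of $R$.

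First I would fix a parameter $\delta=\delta(R)>0$ (to be chosen) and introduce the \emph{stable event}
\[
\mathcal{E}_{\delta}=\bigl\{\,\text{for every } x\in B(R),\ |F_{\mu_{f}}(x)|<\delta \ \Longrightarrow\ |\nabla F_{\mu_{f}}(x)|>\delta\,\bigr\}.
\]
On $\mathcal{E}_{\delta}$, provided $\|\psi\|_{C^{1}}\ll \delta$, the implicit function theorem furnishes a diffeomorphism between the nodal sets of $F_{\mu_{f}}$ and $F_{\mu_{f}}+\psi$ inside $B(R)$, away from $\partial B(R)$. In particular the two nodal domain counts agree up to components whose closure meets $\partial B(R)$, and a standard perimeter/Crofton argument bounds the number of such boundary-touching components by $O(R)$.

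The main obstacle is to control the bad event $\mathcal{E}_{\delta}^{c}$, on which $F_{\mu_{f}}$ has a near-degenerate zero and small perturbations can create or annihilate entire nodal components. Here I would invoke a Bulinskaya-type argument (equivalently, the Kac--Rice formula applied to the joint density of $(F_{\mu_{f}},\nabla F_{\mu_{f}})$) to prove $\mathbb{P}(\mathcal{E}_{\delta}^{c})\ll \delta^{\alpha} R^{2}$ for some $\alpha>0$, which uses crucially that the Gaussian law of $(F_{\mu_{f}}(x),\nabla F_{\mu_{f}}(x))$ is non-degenerate uniformly in $x$ because $\mu_{f}$ has support spanning $\mathbb{R}^{2}$. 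Combined with an $L^{2}$ upper bound of the form $\mathbb{E}[\mathcal{N}(F_{\mu_{f}}+\psi, R)^{2}]\ll R^{4}$ (obtained by the Nazarov--Sodin moment technique, using only gentle smoothness of $\mu_{f}$) and Cauchy--Schwarz, the contribution of $\mathcal{E}_{\delta}^{c}$ to $\mathbb{E}[\mathcal{N}(F_{\mu_{f}}+\psi, R)]$ is $o(R^{2})$. Choosing $\delta\to 0$ slowly enough and $\|\psi\|_{C^{1}}$ correspondingly small relative to $\delta$ then delivers the required estimate and hence the proposition.
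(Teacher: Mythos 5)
The paper does not actually prove this proposition: immediately before the statement it writes ``see \cite[Proposition 3.4 and Proposition 3.5]{BW}'' and imports the result from Buckley--Wigman. So there is no in-paper argument to compare against; I will compare your proposal with the cited Nazarov--Sodin/Sodin/Buckley--Wigman stability strategy and with the way the proposition is used in Section \ref{end}. Your skeleton --- reduce to Theorem \ref{Nazarov-Sodin}, define a ``stable'' event $\mathcal{E}_\delta$, use the implicit function theorem on $\mathcal{E}_\delta$ to match nodal domains up to an $O(R)$ boundary term, and estimate the bad event via a Bulinskaya/Kac--Rice argument --- is indeed the right framework and the one used in \cite{BW}. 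The non-degeneracy claim is also fine in substance: since $F_{\mu_f}$ is stationary the law of $(F_{\mu_f}(x),\nabla F_{\mu_f}(x))$ is independent of $x$, and the gradient covariance $\int_{\mathbb{S}^1}\lambda\lambda^{T}\,d\mu_f(\lambda)$ is non-singular because the support of $\mu_f$ always contains the four points $(\pm a,\pm b)/\sqrt{E}$, which are never collinear (though one should note that the smallest eigenvalue of this matrix can degenerate along the sequence of energies, and in the application in Section \ref{end} the parameter $R$ grows with $E$, so a quantitative version of this non-degeneracy would ultimately be needed).

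The genuine gap is in how you discharge the bad event. Your Cauchy--Schwarz step requires a uniform second-moment bound $\mathbb{E}[\mathcal{N}(F_{\mu_f}+\psi,R)^2]\ll R^4$, which you attribute to ``the Nazarov--Sodin moment technique''. For $F_{\mu_f}$ alone this follows from Faber--Krahn, because $F_{\mu_f}$ is a solution of the Helmholtz equation and hence every nodal domain has area $\gg 1$. But $F_{\mu_f}+\psi$ is not an eigenfunction, and smallness of $\|\psi\|_{C^1}$ gives no control whatsoever on $\nabla^2\psi$. Near a point $x$ where $F_{\mu_f}$ and $\nabla F_{\mu_f}$ are both $O(\delta)$ --- which is exactly what $\mathcal{E}_\delta^c$ permits --- a rapidly oscillating $\psi$ of tiny amplitude (so tiny $\|\psi\|_{C^1}$ but huge second derivative) can create arbitrarily many small nodal components of $F_{\mu_f}+\psi$, so $\mathcal{N}(F_{\mu_f}+\psi,R)$ has no a priori deterministic bound and the required $L^2$ estimate is simply false at this level of generality. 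The way the result is actually used in the paper avoids this: there $\psi$ is not arbitrary but equals $F_{\tau(\omega)}-F_{\mu_f}(R\,\cdot)$, so that $F_{\mu_f}+\psi=F_{\tau(\omega)}$ is itself a Laplace eigenfunction on the rescaled ball and Faber--Krahn does apply (the authors note this explicitly in Section \ref{end}). To make your argument work you must either restrict $\psi$ to such band-limited/eigenfunction perturbations (which is what the application requires and what \cite{BW} effectively does), or replace the $L^2$ bound by a finer count that bounds the number of \emph{newly created} nodal components of $F_{\mu_f}+\psi$ directly in terms of near-degenerate points of $F_{\mu_f}$ --- and that finer bound still needs some control on $\psi$ beyond $C^1$.
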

We conclude this section mentioning another result concerning the positivity of $c_{NS}(\mu_{f})$. Suppose that $\mu_{f}$ is invariant under $\pi/2$ rotations and reflection on the $X$-axis (i.e. $(x_1,x_2)\rightarrow (x_1,-x_2)$). Among these measures, Kurlberg and Wigman \cite[Theorem 1.5]{KW2} showed that there are only two with vanishing Nazarov-Sodin constant:
\begin{align}
	&\nu= \sum_{k=1}^4 \delta_{e^{i\pi k/2}} 	&\tilde{\nu}= \sum_{k=1}^4 \delta_{e^{i(\pi k/2+\pi/4)}} \nonumber. 
\end{align}

\section{Nodal length of toral eigenfunctions in shrinking sets}
\label{semi-loc}
The aim of this section is to prove the  Proposition \ref{claim} and Proposition \ref{claim2}.  First we show the following consequence of Proposition \ref{claim}: 
\begin{prop}
	\label{semi-locality}
	Let $R>1$, $\epsilon>0$ and let $f$ be as in \eqref{function}. Then, uniformly for $s>E^{-1/2+\epsilon}$ and $z\in \mathbb{T}^2$, we have 
	\begin{align}
	\mathcal{N}_f(s,z)= \frac{E}{R^2}\int_{B(s,z)} \mathcal{N}_f\left(\frac{R}{\sqrt{E}},x\right)dx + O\left(\frac{Es^2}{\sqrt{R}}\right). \nonumber
	\end{align}
\end{prop}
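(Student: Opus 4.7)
The plan is an integral-geometric (Fubini) argument. Set $r = R/\sqrt{E}$ and, for each nodal domain $D$ of $f$ on $\mathbb{T}^2$, define
\begin{align}
A(D) := \{x \in \mathbb{T}^2 : D \subset B(r,x)\} = \bigcap_{y \in D} B(r,y). \nonumber
\end{align}
Then Fubini gives
\begin{align}
\int_{B(s,z)} \mathcal{N}_f(r, x)\,dx = \sum_D |A(D) \cap B(s,z)|, \nonumber
\end{align}
the sum ranging over all nodal domains of $f$. Observe that $A(D) = \emptyset$ whenever $\mathrm{diam}(D) \geq 2r$, and that for $\mathrm{diam}(D) < 2r$ the set $A(D)$ is convex, contains $B(r - \mathrm{diam}(D), c)$ and is contained in $B(r, c)$ for any $c \in D$, so $|A(D)| = \pi r^2 + O(r \cdot \mathrm{diam}(D))$.

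The key step is to split the sum according to whether $\mathrm{diam}(D) \leq r/\sqrt{R}$ or $\mathrm{diam}(D) > r/\sqrt{R}$. By Proposition \ref{claim}, $\mathcal{L}_f(s+r, z) \ll s^2\sqrt{E}$, so the number of \emph{large} domains meeting $B(s+r, z)$ is at most $\mathcal{L}_f(s+r, z)/(r/\sqrt{R}) \ll s^2 E/\sqrt{R}$; each contributes at most $\pi r^2$ to the integral and at most $1$ to $\mathcal{N}_f(s,z)$, so their combined effect is absorbed into the claimed error. For \emph{small} domains $D \subset B(s-r, z)$, $|A(D) \cap B(s,z)| = |A(D)| = \pi r^2(1 + O(1/\sqrt{R}))$; summing over all such $D$ and using Faber--Krahn to bound $\mathcal{N}_f(s,z) \ll s^2 E$ recovers $\pi r^2 \mathcal{N}_f(s,z)$ up to $O(r^2 s^2 E/\sqrt{R})$, which after dividing by $r^2$ gives the stated formula.

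The remaining boundary contribution comes from small domains meeting the thin annulus $B(s+r, z) \setminus B(s-r, z)$: by Faber--Krahn every nodal domain has area $\gg 1/E$, so the number of such domains is $\ll sr E = sR\sqrt{E}$, each contributing $O(r^2)$ to the integral and yielding a post-normalisation error of $O(sR\sqrt{E})$, which is $\ll Es^2/\sqrt{R}$ in the regime $s > E^{-1/2 + \epsilon}$. The essential input throughout is Proposition \ref{claim}: its upper bound on the nodal length is what controls the number of large domains, and the threshold $r/\sqrt{R}$ is calibrated precisely to balance this count against the small-domain error $O(r \cdot \mathrm{diam}(D))$, yielding the optimal $1/\sqrt{R}$ decay (a cruder split would only give $1/R$). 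The main obstacle is identifying this threshold and verifying that the deterministic nodal-length bound is strong enough for the balancing to close.
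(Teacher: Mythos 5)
Your approach is essentially the same as the paper's: Proposition \ref{claim} (the nodal length upper bound) controls domains of large diameter, a threshold at diameter $\approx \sqrt{R}/\sqrt{E}$ (the paper writes this as $L/\sqrt{E}$ and then optimizes $L=\sqrt{R}$, which is identical to your $r/\sqrt{R}$) balances the two error sources, and the Faber--Krahn inequality handles the boundary/small-domain errors. Your Fubini formulation via the sets $A(D) = \{x : D \subset B(r,x)\}$ is in fact a cleaner and more rigorous presentation than the paper's informal ``divide $B(s,z)$ into balls of radius $R/\sqrt{E}$'' language, which is imprecise about multiplicities when domains straddle ball boundaries.

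One point worth flagging, though it is shared with the paper itself: the bookkeeping gives $|A(D)| \to \pi r^2$ for a small domain well inside $B(s-r,z)$, so that $\frac{E}{R^2}\int_{B(s,z)} \mathcal{N}_f(r,x)\,dx = \frac{E}{R^2}\cdot\pi r^2\cdot\mathcal{N}_f(s,z)(1+O(1/\sqrt{R})) = \pi\,\mathcal{N}_f(s,z)(1+O(1/\sqrt{R}))$, not $\mathcal{N}_f(s,z)$. Your phrase ``after dividing by $r^2$ gives the stated formula'' quietly drops this $\pi$. The correct prefactor for round balls is $E/(\pi R^2)$; the analogous result in Buckley--Wigman uses squares of side $2R/\sqrt{E}$ together with the prefactor $E/(4R^2)$, which closes the bookkeeping. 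Since this normalization discrepancy traces to the statement itself and the paper's own terse proof does not address it either, it is not a flaw specific to your argument, but if you were to write this up you should state explicitly whether you work with balls or cubes and carry the corresponding constant.
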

 
\begin{proof}[Proof of Proposition \ref{semi-locality} assuming Proposition \ref{claim}]
	 Let $L>1$ be some parameter to be chosen later. By Proposition \ref{claim} the nodal length of $f$ in $B(s,z)$ is, up to rescaling, at most $\sqrt{E}s$. It follows that there are at most $Es^2/L$ nodal domains of diameter bigger than $L/Es^2$. Therefore,  if we divide $B(s,z)$ into balls of radius $R/\sqrt{E}$, any nodal domain of diameter smaller than $L/Es^2$ intersects at most $O(L^2/R^2)$ balls. We deduce that
	\begin{align}
		\mathcal{N}_f(s,z)= \frac{E}{R^2}\int_{B(s,z)}\mathcal{N}_f\left(\frac{R}{\sqrt{E}},x\right)dx  + O\left(\frac{Es^2}{L}\right) + O\left(\frac{Es^2 L^2}{R^2}\right).  \nonumber
	\end{align} 
The Proposition follows choosing $L= \sqrt{R}$. 	
\end{proof}
\subsection{Proof of Proposition \ref{claim}, upper bound}
The proof will be carried out through a series of lemmas, the first is a standard tool to count zeros of analytic functions. 
\begin{lem}[Jensen's bound]
	\label{Jensen's bound}
	Let $h$ be a complex analytic function on some ball $B \subset \mathbb{C}$ and let $Z(h,\frac{1}{2}\overline{B})$ be the number of its zeros in $\frac{1}{2}\overline{B}$.  Then,
	\begin{align}
	Z\left(h,\frac{1}{2}\overline{B}\right)\ll \log \frac{\underset{B}{\sup} |h|}{\underset{\frac{1}{2}\overline{B}}{\max}|h|}. \nonumber
	\end{align}
\end{lem}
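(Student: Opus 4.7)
The lemma is a classical consequence of Jensen's formula, and I would prove it via a Blaschke product argument. The first step is to reduce, by the affine change of variable $z\mapsto 2(z-c)/r$ (where $c,r$ are the centre and radius of $B$), to the normalised situation $B = D(0,2)$ and $\tfrac{1}{2}\overline{B} = \overline{D(0,1)}$; this preserves analyticity and leaves the ratio $\sup_B|h|/\max_{\frac{1}{2}\overline{B}}|h|$ invariant. Let $z_1,\dots,z_n$ denote the zeros of $h$ in $\overline{D(0,1)}$, counted with multiplicity, so that $n = Z(h,\tfrac{1}{2}\overline{B})$; the goal is then to show $n \ll \log(\sup_B|h|/M)$, where $M := \max_{\overline{D(0,1)}}|h|$.

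Next I would introduce the finite Blaschke product for $D(0,2)$,
\[
\phi(z) \;=\; \prod_{k=1}^n \frac{2(z-z_k)}{4-\bar{z}_k z},
\]
whose factors are M\"obius automorphisms of $D(0,2)$ sending each $z_k$ to $0$. Hence $|\phi|=1$ on $\partial D(0,2)$ and $|\phi|<1$ in the interior, and the quotient $g := h/\phi$ extends to a holomorphic function on $D(0,2)$ because the zeros of $\phi$ cancel those of $h$ in $\overline{D(0,1)}$ with matching multiplicities. Applying the maximum principle to $g$ on the exhausting subdomains $D(0,2-\varepsilon)$ and letting $\varepsilon\to 0^+$ yields $\sup_{D(0,2)}|g|\leq \sup_B|h|$; this exhaustion is how I would handle the fact that $h$ need not be continuous up to $\partial B$.

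The heart of the argument is then a uniform upper estimate on $|\phi|$ over $\overline{D(0,1)}$. For any $z,w\in \overline{D(0,1)}$ the factor $|2(z-w)/(4-\bar{w}z)|$ equals the pseudohyperbolic distance in $D(0,1)$ between $z/2$ and $w/2$, and since both of these points lie in $\overline{D(0,1/2)}$ the distance is bounded by some explicit $\rho<1$ (extremised at $\rho=4/5$ by the antipodes $\pm 1/2$). Therefore $|\phi(z)|\leq \rho^n$ uniformly on $\overline{D(0,1)}$, and combining with $|h|=|g||\phi|$ gives
\[
M \;\leq\; \rho^n \cdot \sup_B|h|,
\]
i.e.\ $n\log(1/\rho)\leq \log\bigl(\sup_B|h|/M\bigr)$, which is precisely the desired bound. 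I do not expect any substantive obstacle: the only technical subtlety is the exhaustion argument for the maximum principle applied to $g$ noted above, which is standard.
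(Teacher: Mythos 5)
Your proof is essentially the same as the paper's: both normalise to a disk, peel off a finite Blaschke product vanishing at the zeros in the inner ball, bound each Blaschke factor by $4/5$ on $\frac{1}{2}\overline{B}$ (you derive this via the pseudohyperbolic distance, the paper states it directly), and conclude via the maximum principle applied to the quotient $g=h/\phi$. The only difference is that you spell out the exhaustion argument justifying $\sup_B|g|\leq\sup_B|h|$, which the paper leaves implicit.
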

\begin{proof}
	Up to translation and rescaling, we may assume that $h$ is defined on the unit ball, which we again denote by $B$. Let $w_1,...,w_n$ be the zeros of $h$ on $\frac{1}{2}\overline{B}$ counted with multiplicity and consider the Blaschke factor $	D(z,\omega_i)= (z-\omega_i)/(1-z\overline{\omega_i})$. Then, we can write  $h(z)= \prod_{i}D(z,\omega_i) g(z)$ for some $g$ analytic on $B$ with $\underset{B}{\sup} |h|= \underset{B}{\sup} |g|$. Since $|	D(z,\omega_i)| \leq (4/5)$ for $z\in \frac{1}{2}\overline{B}$, letting $Z=	Z\left(h,\frac{1}{2}\overline{B}\right)$, we have 
	\begin{align}
	\underset{\frac{1}{2}\overline{B}}{\max}|h|\leq \left(\frac{4}{5}\right)^{Z}\underset{\frac{1}{2}\overline{B}}{\max}|g|\leq \left(\frac{4}{5}\right)^{Z}\underset{B}{\sup}|g|\leq \left(\frac{4}{5}\right)^{Z}\underset{B}{\sup}|h|.	 \label{9}
	\end{align}
	The lemma follows taking the logarithm on both sides of \eqref{9}. 
\end{proof}
We also need the following well-known formula of Crofton, see for example \cite{F}.
\begin{lem}
	\label{Crofton's formula} Let $f$ be as in \eqref{function}, $s>0$ and $z\in \mathbb{T}^2$, moreover let  $g(y)=f(z+sy)$ for $y\in B(1)$. Then,  uniformly in $s$ and $z$, we have 
	\begin{align}
	\mathcal{L}_f(s,z)s^{-1}=\mathcal{L}(g)\ll \int_{B(1)} \int_{\mathbb{S}^1} Z( g(u+tw)) d\omega du \nonumber
	\end{align}
	where $ Z( g(u+tw))$ is the number of zeros of $g$ as a function of $t\in [0,1]$.
\end{lem}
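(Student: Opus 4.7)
The plan is to derive the stated inequality as an instance of the Cauchy--Crofton integral-geometric identity, which represents the one-dimensional Hausdorff measure of a rectifiable planar set as a weighted integral of its intersection numbers with a family of line segments. The identity $\mathcal{L}_f(s,z)s^{-1}=\mathcal{L}(g)$ is immediate from the $1$-homogeneity of $\mathcal{H}^1$ under the affine change of variables $y\mapsto z+sy$, together with $Z(g)=s^{-1}(Z(f)-z)$, so the content lies in bounding $\mathcal{H}^1(Z(g)\cap B(1))$ from above by the stated double integral.

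The main step is an application of the area formula. Fix $\omega\in\mathbb{S}^1$ and consider the map
\begin{align}
\Phi_\omega:Z(g)\times[0,1]\to\mathbb{R}^2,\qquad \Phi_\omega(p,t):=p-t\omega. \nonumber
\end{align}
Since $g$ is a finite exponential sum and hence real-analytic, its nodal set is, outside a finite singular subset of $\mathcal{H}^1$-measure zero, a disjoint union of smooth arcs along which the unit tangent $T_pZ(g)$ is well defined. The Jacobian of $\Phi_\omega$ with respect to the product measure $d\mathcal{H}^1\otimes dt$ equals $|T_pZ(g)\wedge\omega|=|\sin\theta_p(\omega)|$, where $\theta_p(\omega)$ denotes the angle between $\omega$ and $T_pZ(g)$. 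The area formula therefore yields
\begin{align}
\int_{B(1)}\#\{t\in[0,1]:u+t\omega\in Z(g)\}\,du=\int_{Z(g)}\ell(p,\omega)\,|\sin\theta_p(\omega)|\,d\mathcal{H}^1(p), \nonumber
\end{align}
where $\ell(p,\omega):=\mathcal{L}^1\{t\in[0,1]:p-t\omega\in B(1)\}$ is the length of the admissible parameter interval.

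Integrating over $\omega\in\mathbb{S}^1$ and swapping integrals by Fubini rewrites the right-hand side of the claim as $\int_{Z(g)}G(p)\,d\mathcal{H}^1(p)$, where $G(p):=\int_{\mathbb{S}^1}|\sin\theta_p(\omega)|\,\ell(p,\omega)\,d\omega$. Two elementary facts then suffice: the classical identity $\int_{\mathbb{S}^1}|\sin\theta_p(\omega)|\,d\omega=4$ holds independently of the tangent direction, and for every $p\in B(1/2)$ one has $\ell(p,\omega)\geq 1/2$ for every $\omega$, since the segment $\{p-t\omega:t\in[0,1/2]\}$ lies in $B(1)$. These combine to give $G(p)\geq 2$ on $B(1/2)$, hence
\begin{align}
\mathcal{H}^1(Z(g)\cap B(1/2))\ll\int_{B(1)}\int_{\mathbb{S}^1}Z(g(u+t\omega))\,d\omega\,du. \nonumber
\end{align}
The full statement with $\mathcal{L}(g)=\mathcal{H}^1(Z(g)\cap B(1))$ on the left then follows by applying the same argument after dilating $g$ by a fixed factor $\rho>1$ and absorbing the absolute multiplicative loss into the $\ll$ symbol.

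The main technical obstacle is the boundary layer $B(1)\setminus B(1/2)$, in which $\ell(p,\omega)$ may vanish for a substantial portion of $\omega\in\mathbb{S}^1$ so that the uniform lower bound on $G(p)$ breaks down. The device of first proving the Crofton inequality on the concentric half-ball $B(1/2)$ and then dilating is precisely what bypasses this degeneracy; the rest of the argument is the standard integral-geometric computation using the area formula and Fubini.
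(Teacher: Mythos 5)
The paper offers no proof of this lemma (it is quoted from Federer), so there is nothing to compare against line by line; your route via the area formula is the standard derivation of the Cauchy--Crofton inequality, and its core --- the Jacobian computation $J\Phi_\omega=|\sin\theta_p(\omega)|$, the identity $\int_{B(1)}\#\{t:u+t\omega\in Z(g)\}\,du=\int_{Z(g)\cap B(1)}\ell(p,\omega)|\sin\theta_p(\omega)|\,d\mathcal{H}^1(p)$, Fubini, and the bound $G(p)\geq 2$ on $B(1/2)$ --- is correct (modulo the routine remarks that $Z(g)$ is a finite union of analytic arcs plus finitely many points, so the area formula applies, and that for all but countably many $\omega$ no arc of $Z(g)$ is a segment in direction $\omega$).

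The final dilation step, however, does not work as written, and this is a genuine gap. Setting $g_\rho(y):=g(\rho y)$, the half-ball estimate applied to $g_\rho$ bounds $\mathcal{H}^1(Z(g_\rho)\cap B(1/2))=\rho^{-1}\mathcal{H}^1(Z(g)\cap B(\rho/2))$ by $\int_{B(1)}\int_{\mathbb{S}^1}Z(g_\rho(u+t\omega))\,d\omega\,du$; but $Z(g_\rho(u+t\omega))$ counts the zeros of $g$ on the segment from $\rho u$ to $\rho u+\rho\omega$, i.e.\ on segments of length $\rho$ with base points ranging over $B(\rho)$. That is a different functional from the right-hand side of the lemma (unit segments with base points in $B(1)$): undoing the scaling produces base points outside $B(1)$ and probes zeros of $g$ in $B(2\rho)$ rather than $\overline{B}(2)$, and no pointwise comparison lets you absorb this into the implied constant. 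The repair is direct and needs no dilation: for $p\in B(1)\setminus B(1/2)$ and $\omega$ in the arc $A_p:=\{\omega\in\mathbb{S}^1:\langle\omega,p\rangle\geq|p|/2\}$, of angular measure $2\pi/3$, one has $|p-t\omega|^2\leq|p|^2-t|p|+t^2\leq|p|^2<1$ for all $t\in[0,1/2]$ (using $t\leq 1/2\leq|p|$), so $\ell(p,\omega)\geq 1/2$ on $A_p$; since $\min_\beta\int_{|\alpha|\leq\pi/3}|\sin(\alpha-\beta)|\,d\alpha>0$, this yields $G(p)\gg 1$ uniformly on all of $B(1)$, and the stated inequality for the full ball follows from your Fubini identity exactly as in the half-ball case.
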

Finally, we need the following lemma, see \cite{N,T}:
\begin{lem}[Nazarov-Turan]
	\label{NT}
	Let $J\in \mathbb{N}$ and let $h(t)=\sum_{i=1}^{J}a_{\xi}e(\xi_i \cdot t)$ for $t\in \mathbb{C}$ and suppose that $\xi_i \in \mathbb{C}$ are distinct.  Then, for any $B\subset \mathbb{C}$ and $\Omega\subset B$ a measurable subset, we have 
	\begin{align}
	\underset{t\in B}{\sup} |h|<\left(c\frac{|\Omega|}{|B|}\right)^{J-1} e ^{ \max_i |\xi_i| |B|}	\underset{t\in \Omega}{\sup} |h|. \nonumber
	\end{align}
	for some explicit $c>0$. 
\end{lem}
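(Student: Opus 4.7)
The plan is to prove this as a Turán--Nazarov Remez-type inequality for exponential polynomials, reducing it to a bound for an ordinary polynomial of degree at most $J-1$ together with a harmonic majorant for the non-vanishing part.

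First, by an affine change of variables $t\mapsto t_0+r\,t$ mapping the unit ball onto $B$ (where $r$ is the radius and $t_0$ the centre of $B$), the statement reduces to the case $B=\overline{B}(1)\subset\mathbb{C}$, with new frequencies $\tilde\xi_i=r\xi_i$ satisfying $|\tilde\xi_i|\ll\max_j|\xi_j|\cdot|B|$. This reduction is exactly what produces the factor $e^{\max_i|\xi_i|\cdot|B|}$: each $e(\tilde\xi_i t)$ can grow by a factor at most $e^{2\pi|\tilde\xi_i|}$ on $\overline{B}(1)$, and pulling this growth out of the estimate leaves us with a problem in which all frequencies are bounded at unit scale.

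Second, after this normalisation, $h$ is entire of exponential type $\ll 1$ and $\log|h|$ is subharmonic. Applied on a slightly enlarged disk, Jensen's formula shows $h$ has at most $O(J)$ zeros inside $\overline{B}(1)$. The next step is to factor out these zeros using Blaschke-type products, writing $h=P\cdot G$ on $\overline{B}(1)$ where $P$ collects the local zero structure and has degree at most $J-1$, and $G$ is non-vanishing. Each factor is then estimated separately. For $G$, the function $\log|G|$ is harmonic; the harmonic measure of $\Omega\subset\overline{B}(1)$ seen from an interior point is at least a constant multiple of $|\Omega|$, so the two-constants theorem bounds $\sup_{\overline{B}(1)}|G|$ by $\sup_\Omega|G|$ up to a controlled constant. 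For $P$, the planar Remez--Brudnyi--Ganzburg inequality yields
\begin{align}
\sup_{\overline{B}(1)}|P|\leq\left(c\,\frac{|\overline{B}(1)|}{|\Omega|}\right)^{J-1}\sup_\Omega|P|.\nonumber
\end{align}
Multiplying these two bounds, restoring the original scale, and absorbing the growth factor into $e^{\max_i|\xi_i|\cdot|B|}$ recovers the stated inequality.

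The main obstacle is the Remez step for the polynomial factor when $\Omega$ is an arbitrary measurable subset of a two-dimensional region rather than a sub-interval of a line: one needs the sharp $(|\Omega|/|B|)^{J-1}$ dependence for a degree-$(J-1)$ polynomial that is very small on a planar set of area $|\Omega|$. This planar Remez estimate (essentially due to Brudnyi--Ganzburg) is the only genuine analytic input; everything else is affine reduction, factorisation of zeros via subharmonicity of $\log|h|$, and standard harmonic-measure bookkeeping.
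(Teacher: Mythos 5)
The paper does not actually prove Lemma~\ref{NT}; it is quoted as a known result with a citation to Nazarov and Tur\'an, so there is no in-paper argument to compare against. Your proposed proof has the right general shape (normalise, factor out the zeros, bound the polynomial factor by a planar Remez inequality and the zero-free factor by a harmonic estimate), and identifying Brudnyi--Ganzburg as the correct input for the polynomial piece is exactly right. However, two of the individual steps have genuine gaps as written.

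First, the claim that ``Jensen's formula shows $h$ has at most $O(J)$ zeros inside $\overline{B}(1)$'' does not follow. Jensen's formula bounds the number of zeros by $\log\bigl(\sup_{B(2)}|h|/|h(0)|\bigr)/\log 2$, and while the numerator's first term is controlled once the frequencies are rescaled to unit size, there is no a priori lower bound on $|h(0)|=|\sum a_i|$: it can vanish, and an exponential polynomial with $J$ terms can have arbitrarily many zeros in a fixed disk once the frequency spread is large. The correct zero-count input is a lemma in the Langer--Tijdeman--Voorhoeve circle, giving roughly $(J-1)+C\,r\,\mathrm{diam}\{\xi_i\}$ zeros in $\overline{B}(r)$; this needs a separate argument (Rolle/Wronskian type, or the rotation trick Nazarov uses) and is not a consequence of Jensen alone. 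Note also that after this correction the degree of $P$ is not $\le J-1$ but $\le J-1+O(\max|\xi_i|\,|B|)$, so part of the $e^{\max_i|\xi_i|\,|B|}$ factor must be absorbed from the Remez exponent, not only from the affine rescaling as your sketch suggests. Second, the two-constants theorem applies to a subset $\Omega$ of the \emph{boundary} of the domain, not to a measurable subset of its interior, so the harmonic-measure step for $G$ does not run as stated: for instance $G=e^{Mz}$ with $\Omega$ a small disk near the left edge of $\overline{B}(1)$ shows that $\sup_{\overline{B}(1)}|G|/\sup_\Omega|G|$ can be as large as $e^{2M}$ even though $\log|G|$ is harmonic, and controlling that oscillation again requires a doubling-type bound on $G=h/P$, not the two-constants theorem. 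These two steps are where Nazarov's actual proof puts the work (a quantitative zero count feeding into a doubling/sublevel-set argument), so the proposal is incomplete at precisely the points where the theorem is nontrivial.
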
 
We are finally ready to prove the upper bound in Proposition \ref{claim}.
\begin{proof}[Proof of the upper bound in Proposition \ref{claim}]
	Let $g(y)=f(z+sy)$ for $y\in B(1)$ and let $h$ be the extension of $g$ to the complex unit ball . By Lemma \ref{Crofton's formula}, we have 
	
		\begin{align}
	\mathcal{L}_f(s,z)s^{-1}=\mathcal{L}(g)\ll \int_{B(1)} \int_{\mathbb{S}^1} Z( g(u+tw)) d\omega du. \label{10.1}
	\end{align}
	 By Lemma \ref{Jensen's bound} and Lemma \ref{NT}, we have 
	\begin{align}
	 Z( g(u+t\omega))\leq Z(h(u+z\omega))\ll	\log \frac{\underset{D}{\sup} |h|}{\underset{\frac{1}{2}\overline{D}}{\max}|h|} \ll N + s\sqrt{E} \leq s\sqrt{E} \label{10}
	\end{align}
uniformly in $u$ and $\omega$. The last inequality in \eqref{10} follows by \eqref{divisor bound} and the fact that $s>E^{-1/2+\epsilon}$. The upper bound then follows by \eqref{10.1} and \eqref{10}. 
\end{proof}

\subsection{Proof of Proposition \ref{claim}, lower bound}
\label{lower bound}
The proof of the lower bound is standard, but we include it for completeness. We need the following result about the density of the zero set: 
\begin{lem}
	\label{density}
Let $f$ be as in \eqref{function}. There exists some absolute constant $c>0$ such that, uniformly for all $z\in \mathbb{T}^2$, the ball $B(c/\sqrt{E},z)$ contains a point where $f$ vanishes. 
\end{lem}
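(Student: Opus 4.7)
The plan is a classical Dirichlet-eigenvalue comparison argument. Argue by contradiction: suppose $f$ has no zero in $B := B(c/\sqrt{E}, z)$, where $c > 0$ is an absolute constant to be fixed at the end. As $f$ is real-valued and $B$ is connected, the intermediate value theorem forces $f$ to have constant sign on $B$; after possibly replacing $f$ by $-f$, we may assume $f > 0$ throughout $B$. To convert this into a uniform lower bound, fix any $c' < c$ and let $B' := B(c'/\sqrt{E}, z)$; by continuity and compactness of $\overline{B'} \subset B$, there is some $\delta > 0$ with $f \geq \delta$ on $\overline{B'}$.

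Let $\phi$ denote the first Dirichlet eigenfunction of $B'$, with eigenvalue $\lambda_1(B') = j_{0,1}^2\, E/(c')^2$, where $j_{0,1}$ is the first positive zero of the Bessel function $J_0$; $\phi$ is strictly positive in the interior and vanishes on $\partial B'$. Recall that with the paper's normalisation $e(\cdot) = e(2\pi i\, \cdot)$ one has $\Delta f + 4\pi^2 E f = 0$. Applying Green's identity to the pair $(f, \phi)$ on $B'$, the $\phi = 0$ boundary condition kills one surface term and leaves
\begin{align*}
\bigl(\lambda_1(B') - 4\pi^2 E\bigr)\int_{B'} f\phi\, dx \;=\; -\int_{\partial B'} f\, \partial_n \phi \, dS.
\end{align*}
By the Hopf boundary point lemma (indeed, by the explicit formula $\phi(x) = J_0(j_{0,1}|x-z|\sqrt{E}/c')$) one has $\partial_n \phi < 0$ strictly on $\partial B'$, while $f \geq \delta > 0$ there; therefore the right-hand side is strictly positive, and the left-hand integral factor $\int_{B'} f\phi$ is also strictly positive. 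This forces $\lambda_1(B') > 4\pi^2 E$, equivalently $c' < j_{0,1}/(2\pi)$. Since $c' < c$ was arbitrary, we conclude $c \leq j_{0,1}/(2\pi)$, so fixing any $c > j_{0,1}/(2\pi)$ (e.g.\ $c = 1$) at the outset yields the desired contradiction.

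I do not expect a genuine obstacle here; the argument is entirely classical. The only technical point requiring a moment's care is the passage from the qualitative hypothesis ``$f$ does not vanish on the \emph{open} ball $B$'' to the quantitative statement ``$f \geq \delta > 0$ on the \emph{closed} ball $\overline{B'}$''. That positive lower bound is what combines with the Hopf lemma to make the boundary integral in Green's identity strictly positive, and hence what drives the strict inequality $\lambda_1(B') > 4\pi^2 E$ underlying the contradiction.
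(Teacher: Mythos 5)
Your proof is correct, but it follows a genuinely different route from the paper. The paper lifts $f$ to the harmonic function $h(x,t)=f(x)e^{\sqrt{E}t}$ (up to a normalisation constant that the paper elides) on the cylinder $B(s,z)\times[-s,s]$, observes that $h$ is of one sign if $f$ is, applies Harnack's inequality to get $\sup|h|\le C\inf|h|$, and then notices that the factor $e^{\sqrt{E}t}$ alone forces $\sup|h|/\inf|h|\ge e^{2s\sqrt{E}}$, which contradicts Harnack once $s>c/\sqrt{E}$ with $c\sim\tfrac12\log C$. Your argument instead compares $f$ against the first Dirichlet eigenfunction of a slightly smaller concentric ball via Green's identity: the sign of $f$ on the closed ball and the strict Hopf sign of $\partial_n\phi$ force $\lambda_1(B')>4\pi^2E$, giving the domain-monotonicity contradiction. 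Both are standard ways to prove density of the nodal set. What yours buys is an \emph{explicit} admissible constant, $c>j_{0,1}/(2\pi)$, from the closed-form Dirichlet eigenvalue of a disc; the paper's Harnack route is less explicit about $c$ but is a little more robust (it generalises verbatim to Laplace eigenfunctions on manifolds, since Harnack only requires control of the metric, not the exact shape of the eigenfunction of a model domain). One small remark: you were right to flag and fix the paper's loose eigenvalue normalisation — with $e(\cdot)=e(2\pi i\,\cdot)$ the eigenvalue is $4\pi^2E$, not $4\pi E$, and your computation of $c'<j_{0,1}/(2\pi)$ depends on using the correct factor. Also, strictly speaking the ball $B(c/\sqrt{E},z)\subset\mathbb{T}^2$ need not embed isometrically in $\mathbb{R}^2$ for small $E$; but since the Green's-identity comparison can be carried out on the Euclidean ball in the universal cover against the periodic lift of $f$, this is harmless and doesn't affect the argument.
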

\begin{proof}
Let  $s>0$,	and observe that the function $h(x,t)= f(x)e^{\sqrt{E}t}$ is harmonic in $D=B(s,z) \times [-s,s]$. If $f$ does not vanish in $B(s,z)$, then $h$ is positive; so it satisfies Harnack's inequality:
\begin{align}
	\sup_D |h|\leq C \inf_D |h| \label{16}
\end{align}
for some absolute constant $C>0$. One the other hand, 
\begin{align}
	\sup_D |h| \geq \sup_{B(s,z)}|f| \exp (s\sqrt{E})\geq\inf_{B(s,z)}|f| \exp (s\sqrt{E}) \label{17}
\end{align}
The lemma follows combining \eqref{16} and \eqref{17} and choosing $c$ appropriately.
\end{proof}

We are finally ready to prove the lower bound in Proposition \ref{claim}.
\begin{proof}[Proof of the lower bound  in Proposition  \ref{claim}]
Using Lemma \ref{density}, we can divide $B(s,z)$ in $O(E s^2)$ balls of radius $c/\sqrt{E}$ for some appropriate  $c>0$  such that $f$ vanishes at the centre of each ball. Let $B$ be one of these balls, then the Faber-Krahn inequality \cite[Theorem 1.5]{M} says that every nodal domain has inner radius at least $c_1/E^{1/2}$ for some absolute $c_1>0$, so we have
\begin{align}
	\mathcal{H}^{1}\{ x\in B: f(x)=0\}\gg E^{-1/2}\label{11}
\end{align}
Since \eqref{11} holds for each of the $O(s^2E)$ balls, the lower bound follows. 
\end{proof}

\subsection{Proof of Proposition \ref{claim2}}
As mentioned in the introduction the proof of  Proposition \ref{claim2} follows the proof of Proposition \ref{claim}. We now give some of the details

\begin{proof}[Proof of Proposition  \ref{claim2}]. Let $\mathcal{E}_E= \mathcal{E}:=\{\xi\in \mathbb{Z}^2: |\xi|^2=E\}$, we define an equivalence relation on $\mathcal{E}$ as follows: let $\xi=(\xi^1,\xi^2),\eta=(\eta^1,\eta^2) \in \mathcal{E}$, then $\xi \sim \eta$ if $\xi^1=\pm \eta^2$ and $\xi^2=\pm \eta^2$. Then the general Laplace eigenfunction with eigenvalue $\pi E$ (we make no distinction between $E$ and $\pi E$) satisfying either Dirichlet or Neumann boundary conditions is 
\begin{align}
	&\tilde{f}_{\text{Dirichlet}}(x)= \sum_{ \xi\in \mathcal{E}/ \sim} a_{\xi} \sin( \pi \xi^1 x^1)\sin( \pi \xi^2 x^2) \label{Di} \\
		&\tilde{f}_{\text{Neuman}}(x)= \sum_{ \xi\in \mathcal{E}/ \sim} b_{\xi} \cos( \pi \xi^1 x^1)\cos( \pi \xi^2 x^2) \label{Neu}
\end{align} 
where $x=(x^1,x^2)$. Using the formulas $\sin(a)\sin(b)= 2^{-1}(\cos(a-b)- \cos(a+b))$ and  $\cos(a)\cos(b)= 2^{-1}(\cos(a+b)+ \cos(a-b))$, we can rewrite \eqref{Di} and \eqref{Neu} as 
\begin{align}
		&\tilde{f}_{\text{Dirichlet}}(x)= \sum_{ \xi\in \mathcal{E}/ \sim} \tilde{a}_{\xi} e\left(\langle \xi, x\rangle\right) 
	&\tilde{f}_{\text{Neuman}}(x)= \sum_{ \xi\in \mathcal{E}/ \sim} \tilde{b}_{\xi} e\left(\langle \xi, x\rangle\right)  \nonumber
\end{align}
for some complex coefficients $\tilde{a}, \tilde{b}$. The proof now follows step by step the proof of the upper bound in Proposition \ref{claim}. 
\end{proof}
\section{Bourgain's de-randomisation in shrinking sets}
\label{BourgainS}
Let $R>1$ be fixed, and consider the restriction of $f$, as in \eqref{function}, to a small square centred at $x\in \mathbb{T}^2$:
\begin{align}
	F_x(y)= f\left( x+ \frac{R}{\sqrt{E}}y\right). \label{F}
\end{align}
for $y\in B(1)$. In this section we are going to show that if we sample $x$ uniformly at random from $B(s,z)$, where $z\in \mathbb{T}^2$ and $s>E^{-1/2+\epsilon}$, then the ensemble $\{F_x\}_{x\in B(s,z)}$ approximates the Gaussian field $F_{\mu_{f}}$. The proofs are based on \cite{BU,BW}; nevertheless, the use of Theorem \ref{semi} is required to control the averaging over $B(s,z)$.
\subsection{Approximating $f$ in small squares}
\label{approximating}
In this section, we construct an auxiliary function $\phi_x(y)$ which approximates $F_x(y)$ for most $x\in \mathbb{T}^2$. We begin with some notation: let $K>1$ be some (large) parameter and divide the circle $\mathbb{S}^1$ into arcs $I_k$, of length $1/2K$ for $k\in\{-K,...,K\}$.  Furthermore, let $\delta>0 $ be some (small) parameter and denote by  $\mathcal{K}\subset \{-K,...,K \}$ the subset of indices such that if $k\in \mathcal{K}$ then  
\begin{align}
\mu_f(I_k)>\delta. \label{41}
\end{align}
Finally, let $\mathcal{E}^k=\mathcal{E}_E^k:=\{ |\xi|^2=E: \xi \in I_k\}$ and let $\zeta_k$ be the mid point of $I_k$. 
We are ready to begin the construction, first we re-write $F_x$ as
\begin{align}
F_x(y)= \sum_{k\in \mathcal{K}}\sum_{\xi\in \mathcal{E}^k}a_{\xi}e(\langle \xi, x \rangle)e\left(\left\langle\frac{\xi}{\sqrt{E}},Ry\right\rangle\right)  + \sum_{k\not\in \mathcal{K}}\sum_{\xi\in \mathcal{E}^k}a_{\xi}e(\langle \xi, x \rangle)e\left(\left\langle\frac{\xi}{\sqrt{E}},Ry\right\rangle\right). \label{42}
\end{align}
Second we approximate $\xi/\sqrt{E}$ by $\zeta_k$ for all $\xi\in \mathcal{E}^k$, and define the function
\begin{align}
\phi_x(y)&=\sum_{k\in \mathcal{K}}\left(\sum_{\xi\in \mathcal{E}^k}a_{\xi}e(\langle \xi, x \rangle) \right)e(\langle R\zeta^{k},y \rangle)= \sum_{k\in \mathcal{K}}\mu_{f}(I_k)^{1/2}b_k(x)e(\langle R\zeta^{k},y \rangle) \label{phi}
\end{align}
where
\begin{align}
b_k(x)=\frac{1}{\mu_{f}(I_k)^{1/2}}\sum_{\xi\in \mathcal{E}^{(k)}}a_{\xi}e(\langle\xi,x\rangle). \label{bk}
\end{align}
The following lemma shows that $\phi_x(y)$ is a good approximation to $F_x(y)$ for most $x\in \mathbb{T}^2$.  
\begin{lem}
	\label{first approx}
Let $\epsilon>0$, $R,K,\delta$ be as in Section \ref{approximating}, $F_x$, $\phi_x$ be as in \eqref{F} and \eqref{phi} respectively and $S'$ be defined in Section \ref{NTpre}. Then, for all $E\in S'$ we have 
\begin{align}
	\frac{1}{\pi s^2}\int_{B(s,z)} || F_x- \phi_x||_{\mathcal{C}^1(B(1))} dx \ll R^{6}K\delta  + R^{8}K^{-2} + R^{8}E^{-(1/3)\epsilon} \nonumber
\end{align}
uniformly for all $s>E^{1/2+\epsilon}$ and $z\in \mathbb{T}^2$. 
\end{lem}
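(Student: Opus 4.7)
The plan is to split the difference into two natural pieces and estimate each in mean square over $x\in B(s,z)$, then upgrade from $L^2$ to $\mathcal{C}^1$ via a Bernstein-type inequality. Concretely, write $F_x(y)-\phi_x(y)=E_1(x,y)+E_2(x,y)$, where
\[
E_1(x,y)=\sum_{k\notin\mathcal{K}}\sum_{\xi\in\mathcal{E}^k}a_\xi\,e(\langle\xi,x\rangle)\,e(\langle R\xi/\sqrt{E},y\rangle)
\]
collects the ``tail'' arcs of small spectral mass, and
\[
E_2(x,y)=\sum_{k\in\mathcal{K}}\sum_{\xi\in\mathcal{E}^k}a_\xi\,e(\langle\xi,x\rangle)\bigl[e(\langle R\xi/\sqrt{E},y\rangle)-e(\langle R\zeta_k,y\rangle)\bigr]
\]
is the error from replacing each $\xi/\sqrt{E}$ by the arc midpoint $\zeta_k$. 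Since $E_1$ and $E_2$ are, in $y$, trigonometric polynomials with frequencies of modulus at most $R$, a standard Bernstein/Sobolev estimate on $B(1)$ converts a bound on $\|\,\cdot\,\|_{L^2_y(B(2))}$ into a bound on $\|\,\cdot\,\|_{\mathcal{C}^1(B(1))}$ at the cost of an explicit power of $R$; the $R^6$ and $R^8$ prefactors in the claim come from this conversion together with the eventual Cauchy--Schwarz step in $x$.

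Next I would fix $y\in B(1)$ and expand the mean square in $x$, writing
\[
\frac{1}{\pi s^2}\int_{B(s,z)}|E_j(x,y)|^2\,dx
=\sum_{\xi,\eta}a_\xi\overline{a_\eta}\,(\text{factor in }y)\,\cdot\frac{1}{\pi s^2}\int_{B(s,z)}e(\langle\xi-\eta,x\rangle)\,dx,
\]
the integral being the Fourier transform of the indicator of a disc, which equals a scalar times $J_1(2\pi s|\xi-\eta|)/(s|\xi-\eta|)$ when $\xi\neq\eta$. Diagonal terms ($\xi=\eta$) give the deterministic contributions: for $E_1$ one gets $\sum_{\xi\in\bigcup_{k\notin\mathcal{K}}\mathcal{E}^k}|a_\xi|^2=\sum_{k\notin\mathcal{K}}\mu_f(I_k)\ll K\delta$, while for $E_2$ the pointwise estimate $|e(\langle R\xi/\sqrt{E},y\rangle)-e(\langle R\zeta_k,y\rangle)|\ll R|\xi/\sqrt{E}-\zeta_k|\ll R/K$ for $\xi\in\mathcal{E}^k$ yields a contribution $\ll(R/K)^2$.

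The off-diagonal terms are precisely where Theorem \ref{semi} does its work. Since $E\in S'$, any nonzero $\xi-\eta$ with $\xi,\eta$ on the lattice circle satisfies $|\xi-\eta|>Q$ with $QE^{1/2}/(\log E)^m\to\infty$ for every $m$; combined with $s>E^{-1/2+\epsilon}$, this gives $s|\xi-\eta|\geq E^{\epsilon/2}\to\infty$. Using $J_1(t)/t\ll t^{-3/2}$ for $t\gg 1$, the off-diagonal is bounded by
\[
\Bigl(\tfrac{R}{K}\Bigr)^{\!2}\sum_{\xi\neq\eta}|a_\xi a_\eta|(s|\xi-\eta|)^{-3/2}\ll (R/K)^2\,N\,(sQ)^{-3/2}\ll R^2 E^{-\epsilon}
\]
after invoking Cauchy--Schwarz with $\sum|a_\xi|=O(N^{1/2})$ and the divisor bound \eqref{divisor bound} $N\ll E^{o(1)}$. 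The analogous bound holds for $E_1$ with the $(R/K)^2$ replaced by $1$.

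Finally, summing the $L^2_y$ bounds over $y\in B(1)$, applying the Bernstein upgrade to pass to $\mathcal{C}^1$, and then using Cauchy--Schwarz
\[
\frac{1}{\pi s^2}\int_{B(s,z)}\|F_x-\phi_x\|_{\mathcal{C}^1(B(1))}\,dx\leq\Bigl(\tfrac{1}{\pi s^2}\int_{B(s,z)}\|F_x-\phi_x\|_{\mathcal{C}^1(B(1))}^{2}\,dx\Bigr)^{\!1/2}
\]
to pass from $L^2_x$ to $L^1_x$ produces the three advertised error terms $R^6K\delta$, $R^8K^{-2}$ and $R^8E^{-(1/3)\epsilon}$ (the extraction of the final exponent $\epsilon/3$ reflecting the $3/2$-power Bessel decay). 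The main technical obstacle is the quasi-correlation step: everything off-diagonal would be a priori comparable in size to the diagonal if $\xi-\eta$ could be an exceptionally short nonzero lattice difference, and it is precisely Theorem \ref{semi} --- encoded in the choice of the density-one set $S'$ --- that rules this out at the scale $s\geq E^{-1/2+\epsilon}$ and makes the shrinking-set de-randomisation go through.
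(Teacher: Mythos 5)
Your proof takes essentially the same route as the paper: the same decomposition of $F_x-\phi_x$ into the tail $E_1$ (arcs with $\mu_f(I_k)\le\delta$) and the midpoint-approximation error $E_2$, the same diagonal versus off-diagonal split after expanding the $x$-mean-square against the disc Fourier transform \eqref{orthogonality}, the same diagonal contributions $\ll K\delta$ and $\ll (R/K)^2$, and the same use of Theorem~\ref{semi} plus the Bessel decay $J_1(t)/t\ll t^{-3/2}$ to kill the off-diagonal. You correctly identify the quasi-correlation bound as the one genuinely new ingredient that makes the shrinking-ball average work.

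The only real divergence is cosmetic: you pass from $L^2_y$ to $\mathcal{C}^1(B(1))$ by a Bernstein/Nikolskii argument for band-limited functions (frequencies $\le R$), whereas the paper bounds $\|\cdot\|_{\mathcal{C}^1}$ by the $H^3$ norm via Sobolev embedding and differentiates term by term, picking up $R^{2|\alpha|}$ with $|\alpha|\le 3$. Both are legitimate and both produce explicit powers of $R$; the powers you report ($R^6,R^8$) are the paper's, but with your square-root Cauchy--Schwarz in $x$ the honest exponents would be halved. That mismatch is inconsequential — indeed the paper itself omits the square root that Cauchy--Schwarz properly requires — since in the end one only needs the right-hand side to tend to zero once $\delta\ll K^{-2}$, $K$ is large in terms of $R$, and $E$ is large. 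So the constants are loose in both accounts, but the argument is sound and matches the paper's in substance.
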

\begin{proof}
	Thanks to the Sobolev embedding Theorem, we bound the $\mathcal{C}^1$ norm by the $H^3$ norm 
		\begin{align}
	\frac{1}{\pi s^2}\int_{B(s,z)} || F_x- \phi_x||_{\mathcal{C}^1} dx \ll 	\frac{1}{\pi s^2}\int_{B(s,z)} \int_{B(1)} | D^{\alpha} (F_x (y)- \phi_x(y)|^2 dy \label{49}
	\end{align}
	where $|\alpha|= |(\alpha_1,\alpha_2)| \leq 3$ and $D^{\alpha}= \partial_{\alpha_1}\partial_{\alpha_2}$. First, we estimate the contribution coming from the second term on the right hand side of \eqref{42}. Expanding the square and using the triangular inequality we obtain
	\begin{align}
\frac{1}{\pi s^2}\int_{B(s,z)} \int_{B(1)} \left| D^{\alpha}\sum_{k\not\in \mathcal{K}}\sum_{\xi\in \mathcal{E}^k}a_{\xi}e(\langle \xi, x \rangle)e\left(\left\langle\frac{\xi}{\sqrt{E}},Ry\right\rangle\right)\right|^2 dxdy \nonumber \\
\ll \frac{1}{\pi s^2}R^{2|\alpha|} \sum_{k,k'\not\in \mathcal{K}} \sum_{\substack{\xi\in \mathcal{E}^k \\ \xi'\in \mathcal{E}^{k'}}}| a_{\xi}\overline{a_{\xi'}}|\left| \int_{B(s,z)}e(\langle \xi-\xi', x \rangle)dx \right| .\label{43}  
	\end{align}
Observe that	for $a\in \mathbb{R}^2$ 	
\begin{align}
\int_{B(s,z)} e(\langle a,x\rangle) dx = \begin{cases}
\pi s^2 & a=0 \\
\pi s^2 e(\langle a,z\rangle) \frac{J_1(s|a|)}{s|a|} & a\neq 0 
\end{cases}. \label{orthogonality}
\end{align}
So we separate the terms with $\xi=\xi'$ from the others on the right hand side of \eqref{43} to obtain 
\begin{align}
\eqref{43}\ll	R^{2|\alpha|}\sum_{k\not\in \mathcal{K}}\sum_{\xi\in \mathcal{E}^k} |{a_{\xi}}|^2 + 	R^{2|\alpha|} \sum_{k,k'} \sum_{\xi \neq \xi'} |a_{\xi}a_{\xi'}| \frac{J_1\left( s|\xi-\xi'|\right)}{s|\xi-\xi'|} \label{44}
\end{align}
Since $k\not\in \mathcal{K}$ implies  $\sum_{\xi\in \mathcal{E}^k} |{a_{\xi}}|^2= \mu_{f}(I_k)\leq \delta$, the first term on the right hand side of \eqref{44} is bounded by $R^{2|\alpha|} K \delta$. By Theorem \ref{semi} $|\xi-\xi'|\gg E^{1/2-2\epsilon}$ so  $s|\xi-\xi'|\gg E^{\epsilon}$, it follows that 
\begin{align}  
\frac{J_1\left( s|\xi-\xi'|\right)}{s|\xi-\xi'|} \ll E^{(-2/3)\epsilon} \label{45}
\end{align}
where we have used the bound $J_1(T)\ll T^{-1/2}$ valid for all sufficiently large $T$. Using \eqref{45}, estimating trivially $|a_{\xi}|\leq 1$ and bearing in mind \eqref{divisor bound}, we obtain 
\begin{align}
	\sum_{k,k'} \sum_{\xi \neq \xi'} |a_{\xi}a_{\xi'}| \frac{J_1\left( s|\xi-\xi'|\right)}{s|\xi-\xi'|} \ll E^{(-2/3)\epsilon} \cdot N^2 \ll E^{(-1/3)\epsilon}. \nonumber
\end{align}
All in all, we have shown that 
\begin{align}
\eqref{43} \ll R^{2|\alpha|} K \delta + R^{2|\alpha|}E^{-(1/3)\epsilon}. \label{48}
\end{align}
Now we turn our attention to bounding the difference between $\phi_x$ and the first term on the right hand side of \eqref{42}. Expanding the square and using the triangular inequality, we have
\begin{align}
\frac{1}{\pi s^2}\int_{B(s,z)} \int_{B(1)} \left| D^{\alpha}\sum_{k\in \mathcal{K}}\sum_{\xi\in \mathcal{E}^k} a_{\xi}e( \langle\xi,x\rangle)\left( e\left(\left\langle\frac{\xi}{\sqrt{E}},Ry\right\rangle\right) - e(\langle R\zeta_k, y\rangle) \right) \right|^2 dxdy \nonumber \\
\ll \frac{R^{2|\alpha|+2}}{\pi s^2} \sum_{k,k'} \sum_{\substack{\xi\in \mathcal{E}^k \\ \xi'\in \mathcal{E}^{k'}}}|a_{\xi}a_{\xi'}|\left|\frac{\xi}{\sqrt{E}}-\zeta_k\right|\left|\frac{\xi'}{\sqrt{E}}-\zeta_{k'}\right|\left| \int_{B(s,z)}e(\langle \xi-\xi', x \rangle)dx  \right|. \label{47}
\end{align} 
Similarly to the above, via \eqref{orthogonality} and \eqref{45}, the contribution from the terms with $\xi\neq \xi'$ is at most $R^{2|\alpha|+2} E^{-(1/3)\epsilon} $. The contribution of the terms with $\xi=\xi'$, bearing in mind that $|\xi/\sqrt{E}-\zeta_k|\leq 1/K$, can be bounded by 
\begin{align}
	R^{2|\alpha|+2}\sum_k \sum_{\xi\in \mathcal{E}^k} |a_{\xi}|^2\left|\frac{\xi}{\sqrt{E}}-\zeta_k\right|^2 \ll \frac{	R^{2|\alpha|+2}}{K^2}\sum_k \mu_f(I_k)\leq  \frac{	R^{2|\alpha|+2}}{K^2}.\label{46} 
\end{align}
All in all we have, 
\begin{align}
	\eqref{47} \ll \frac{	R^{2|\alpha|+2}}{K^2} + R^{2|\alpha|+2} E^{-(1/3)\epsilon}. \label{50}
\end{align}
The lemma follows combining \eqref{49}, \eqref{48} and \eqref{50}. 
\end{proof}
\subsection{Gaussian moments}
Recall the notation \eqref{bk}, we are going to show that the vector $(b_k)_{k\in \mathcal{K}}$ approximates a Gaussian vector $(c_k)_{k\in \mathcal{K}}$, where $c_k$ are i.i.d. complex standard Gaussian random variables subject to $\overline{c}_k=c_{-k}$. We prove the following quantitative lemma:  
\begin{lem}
	\label{independence in shrinking}
	Let  $\epsilon>0$, $b_k$ be as in \eqref{bk} and $\mathcal{K},K,\delta$ be as in Section \ref{approximating}. Moreover let $B$ be some large parameter and fix two sets of positive integers $\{r_k\}_{k\in \mathcal{K}}$ and $\{s_k\}_{k\in \mathcal{K}}$ such that $\sum r_k +  s_k \leq B$. Suppose that $E \in S'$, then 
	\begin{align}
	\left|\frac{1}{\pi s^2}\int_{B(s,z)}\prod_{k\in \mathcal{K}}b_k^{r_k}\overline{b}_k^{s_k}dx- \mathbb{E}\left[\prod_{k\in \mathcal{K}}c_k^{r_k}\overline{c}_k^{s_k}\right]\right|= o_{\delta,K,B}(1) \hspace{5mm} \text{as} \hspace{2mm} N\rightarrow \infty \nonumber
	\end{align}
	uniformly for $f$  flat, $s>E^{-1/2+\epsilon}$ and $z\in \mathbb{T}^2$. 
\end{lem}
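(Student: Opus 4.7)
The plan is a method-of-moments argument in the spirit of Bourgain \cite{BU} and Buckley--Wigman \cite{BW}, with the full-torus orthogonality replaced by the shrinking-ball Bessel identity \eqref{orthogonality} and with arithmetic input supplied by Theorems \ref{BB} and \ref{semi}. Expanding $\prod_k b_k^{r_k}\bar{b}_k^{s_k}$ via \eqref{bk} and exchanging sum and integral yields
\[
\frac{1}{\pi s^2}\int_{B(s,z)}\prod_{k\in\mathcal{K}} b_k^{r_k}\bar{b}_k^{s_k}\,dx = \prod_k\mu_f(I_k)^{-(r_k+s_k)/2}\sum_{\underline{\xi},\underline{\eta}} A(\underline{\xi},\underline{\eta})\cdot\frac{1}{\pi s^2}\int_{B(s,z)} e(\langle \Xi, x\rangle)\,dx,
\]
where $A(\underline{\xi},\underline{\eta})=\prod_{k,j} a_{\xi^{(k)}_j}\overline{a_{\eta^{(k)}_j}}$, $\Xi=\sum_{k,j}\xi^{(k)}_j-\sum_{k,j}\eta^{(k)}_j$, and the tuples range over $\xi^{(k)}_j,\eta^{(k)}_j\in\mathcal{E}^k$ for $k\in\mathcal{K}$ with multiplicities $r_k,s_k$. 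I then split the sum on the right according to whether $\Xi=0$ or $\Xi\neq 0$, applying \eqref{orthogonality} in each case.

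For the off-diagonal contribution $\Xi\neq 0$: since $E\in S'$ and $\Xi$ is a signed sum of at most $B$ lattice points on the circle of radius $\sqrt{E}$, Theorem \ref{semi} forces $|\Xi|$ to be essentially of size $E^{1/2}$, so that $s|\Xi|\gg E^{\epsilon/2}$. Combined with the bound $J_1(T)\ll T^{-1/2}$, the Bessel factor in \eqref{orthogonality} is $O(E^{-\epsilon/4})$. Controlling the coefficient product by Cauchy--Schwarz ($\sum_\xi |a_\xi|\leq N^{1/2}$), the number of configurations by the divisor bound \eqref{divisor bound} (which gives $N^B\leq E^{o(1)}$), and the normalising factor $\mu_f(I_k)^{-1}\leq\delta^{-1}$ on $\mathcal{K}$, the total off-diagonal contribution is $O(\delta^{-B}E^{-\epsilon/4+o(1)})=o_{K,\delta,B}(1)$.

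For the diagonal $\Xi=0$ I isolate the \emph{Wick pairing} configurations, in which the vectors attached to arc $k$ are grouped---using the reality relation $\overline{a_\xi}=a_{-\xi}$ to connect $I_k$ with its mirror $I_{-k}$---into pairs each of which individually sums to zero. Each such pair contributes a factor $\mu_f(I_k)$ after summing its free variable; a standard bookkeeping, together with $\overline{c_k}=c_{-k}$ and $\mathbb{E}[c_k c_{-k}]=\mathbb{E}[|c_k|^2]=1$, shows that the total pairing sum equals the Gaussian moment $\mathbb{E}[\prod_k c_k^{r_k}\bar{c}_k^{s_k}]$. All remaining (non-paired) solutions to $\Xi=0$ are genuine higher spectral correlations counted by Theorem \ref{BB} with saving $O(N^{\gamma B})$ over the pairing count; using flatness $|a_\xi|^2\ll N^{-1+\rho}$ and $\mu_f(I_k)^{-1}\leq\delta^{-1}$, their weighted contribution is $o_{K,\delta,B}(1)$ as long as $\gamma<1-\rho$.

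The main obstacle lies in the combinatorial verification inside the diagonal step: one must check that the within-arc and cross-mirror-arc pairings on the eigenfunction side exactly match the Wick expansion of the Gaussian moment on the field side, so that after the normalisation $\prod_k\mu_f(I_k)^{-(r_k+s_k)/2}$ the two quantities agree term by term. Such a matching is already present in the full-torus analysis of \cite{BU,BW}, and transfers to shrinking balls because the Bessel identity \eqref{orthogonality} coincides with the indicator of $\Xi=0$ at the main-term level, while Theorem \ref{semi} provides the uniform spectral gap needed to dispose of the off-diagonal Bessel tail.
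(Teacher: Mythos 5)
Your proposal follows essentially the same route as the paper's proof: expand the mixed moment, apply the shrinking-ball Bessel identity \eqref{orthogonality}, and split into a constant term (further divided into the pairing/diagonal solutions recovering the Gaussian moment and a remainder controlled by Theorem \ref{BB}) and an oscillatory term killed by Theorem \ref{semi} together with Bessel decay. One small arithmetic slip: your sufficiency condition should read $\gamma < (1-\rho)/2$ rather than $\gamma < 1-\rho$ (you pick up $N^{\gamma B}$ configurations each weighted by roughly $N^{-(1-\rho)B/2}$), but since Theorem \ref{BB} holds for any $\gamma\in(0,1)$ this does not affect the argument.
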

\begin{proof}
Expanding the product, we have 
	\begin{align}
			\frac{1}{\pi s^2}	\int_{B(s,z)}\prod_{k}b_k^{r_k}\overline{b}_k^{s_k}dx= \prod_{k} \mu_{f}(I_k)^{-(r_k+s_k)/2}\sum \left( \prod_{i=1}^{r_k}\prod_{j=1}^{s_k}a_{\xi_{i,k}}\overline{a}_{\xi'_{j,k}}\right) \times \nonumber\\  \times \int_{B(s,z)} e\left( \Bigl\langle \sum_{k,i,j}  (\xi_{i,k}- \xi'_{j,k}),x \Bigr\rangle\right)dx \label{formula1}
	\end{align}
where the out most sum is over all the choices $\xi_{1,1},...,\xi_{1,r_1}, \xi'_{1,1}...,\xi'_{1,s_1},..., \xi_{k,1},..,.\xi_{k,r_k}, \xi'_{k,1}, \\ ...,\xi'_{1,s_k}$. We split the sum in \eqref{formula1} according to \eqref{orthogonality}: we first consider the contribution from the \emph{constant term} $\sum_{k,i,j}(\xi_{i,k}- \xi'_{j,k})=0$ and then the contribution from the \emph{oscillatory term} $|\sum_{k,i,j}(\xi_{i,k}- \xi'_{j,k})|>0$. Furthermore, we subdivide the constant term into \textquotedblleft diagonal" solutions, namely $\{\xi_{i,k}\}= \{\xi'_{j,k}\}$ for each $k\in \mathcal{K}$, and all the other solutions, which we call \textquotedblleft off-diagonal".
	
	\textbf{Constant term}, \textit{\textquotedblleft diagonal" solutions}.
	If  $\{\xi_{i,k}\}= \{\xi'_{j,k}\}$, then $r_k= s_k$, so, taking into account the possible rearrangements and by definition of $I_k$ and $\mu_{f}(I_k)$, we have a contribution to the  right hand side of \eqref{formula1} of 
	\begin{align}
	g_k:=r_k! \cdot \mu_{f}(I_k)^{-r_k}\sum_{ \{\xi_{i,k}\}= \{\xi'_{j,k}\} } \prod_{i=1}^{r_k} |a_{\xi_{i,k}}|^2	=  \mathbb{E}[ |c_k|^{2r_k}]. \nonumber
	\end{align}
    Multiplying together the contributions from all $k$'s, we obtain 
    \begin{align}
    	\prod_{k} g_k= \mathbb{E}\left[\prod_{k}|c_k|^{2 r_k}\right].\label{3.1}
    \end{align}
    
    	\textbf{Constant term}, \textit{\textquotedblleft off-diagonal" solutions}.
    	Let $B_1=\sum_{k}r_k+ s_k$. Since $E\in S'$, Theorem \ref{BB} implies that the number of off-diagonal solutions is at most $O(N^{\gamma B_1})$. Since $\mu_{f}(I_k)\geq \delta$ for all $k \in \mathcal{K}$ and $|a_{\xi}|^2\leq N^{-1+o(1)}$, we obtain as $N\rightarrow \infty$
    	\begin{align}
    \left|\sum_{\text{off-diagonal}}\prod_{k} \mu_{f}(I_k)^{r_k+s_k/2}\sum \prod_{k} \prod_{i=1}^{r_k}\prod_{j=1}^{s_k}a_{\xi_{i,k}}\overline{a}_{\xi_{j,k}}\right|\ll_{K} N^{-B_1/2+ \gamma B_1 +o(1)} \delta^{-B_1/2}=o_{\delta, B, K}(1).  \label{3.2}
    	\end{align}
    	
    \textbf{Oscillatory term}. If 	$|\sum_{k,i,j}(\xi_{i,k}- \xi'_{j,k})|>0$, Theorem \ref{semi} implies that $|\sum_{k,i,j}(\xi_{i,k}- \xi'_{j,k})|>E^{1/2-2\epsilon}$, therefore $s|\sum_{k,i,j}(\xi_{i,k}- \xi_{j,k})|>E^{\epsilon}$. So, bearing in mind that  $J_1(T) \ll T^{1/2}$ for $T$ sufficiently large, we have 
    \begin{align}
    	\frac{J_1(s|\sum_{k,i,j}(\xi_{i,k}- \xi_{j,k})|)}{s|\sum_{k,i,j}(\xi_{i,k}- \xi_{j,k})|}\ll E^{-2/3 \epsilon} .\label{3.5}
    \end{align}
    	Since the maximum number of terms in the outer sum in \eqref{formula1} is $N^{B_1}$, $\mu_{f}(I_k)\geq \delta$ for all $k \in \mathcal{K}$, $|a_{\xi}|\leq N^{-1+o(1)}$ and bearing in mind \eqref{3.5} and \eqref{divisor bound} , we obtain as $N\rightarrow \infty$
    	
    	\begin{align}
    		\prod_{k} \mu_{f}(I_k)^{-(r_k+s_k)/2}\sum \left( \prod_{k} \prod_{i=1}^{r_k}\prod_{j=1}^{s_k}a_{\xi_{i,k}}\overline{a}_{\xi_{j,k}}\right)		\frac{J_1(s|\sum_{k,i,j}(\xi_{i,k}- \xi_{j,k})|)}{s|\sum_{k,i,j}(\xi_{i,k}- \xi_{j,k})|} \nonumber \\ \ll_{K} N^{B_1/2 +o(1)} \delta^{-B_1/2} E^{-2/3 \epsilon}=o_{\delta,K,B_1}(1). \label{3.3}
    	\end{align} 
 The Proposition follows combining \eqref{3.1}, \eqref{3.2}, \eqref{3.3}. 
\end{proof}
 Lemma \ref{independence in shrinking}, by the method of moments, implies that the vector $(b_k)_{k\in \mathcal{K}}$ converges in distribution to the vector $(c_k)_{k\in \mathcal{K}}$ . We restate this fact in the following convenient way, more details can be found in \cite[Lemma 6.5]{BW} and \cite[Page 9]{BU}, see in particular \cite[Lemma 6.4]{BW} for the fact that the measure induced by the $b_k$'s is absolutely continuous with respect to the Lebesgue measure.  

\begin{cor}
	\label{tau}
Let $\epsilon>0$ and $\alpha_1,\alpha_2>0$ be given, let $\delta,K,B,$ as in Lemma \ref{independence in shrinking} and $f$ be as in \eqref{function}. Suppose that $E \in S'$ is sufficiently large depending on $\epsilon,\alpha_1,\alpha_2,K,\delta$ and $B$. Then, uniformly for all $f$  flat, $s>E^{-1/2+\epsilon}$ and $z\in \mathbb{T}^2$, there exists a measurable map $\tau: \Omega\rightarrow B(s,z)$  and a subset $\Omega^1\subset \Omega$ with the following properties: 

\begin{enumerate}
	\item For any measurable $A\subset \Omega^1$, we have $\vol(\tau(A))= \pi s^2\mathbb{P}(A)$.
	\item  $\mathbb{P}(\Omega^1)>1-\alpha_1$ .
	\item For all $\omega\in \Omega^1$, we have $|b_k(\tau(\omega))- c_k(\omega)|\leq \alpha_2$ uniformly for all $k\in \mathcal{K}$. 
\end{enumerate}
\end{cor}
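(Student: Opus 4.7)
The plan is to upgrade the moment matching of Lemma \ref{independence in shrinking} to weak convergence of the joint law of $(b_k(x))_{k\in\mathcal{K}}$ (with $x$ uniform on $B(s,z)$) to the complex Gaussian vector $(c_k)_{k\in\mathcal{K}}$, then to promote this weak convergence to a quantitative coupling, and finally to realise that coupling via a measurable, measure-preserving map $\tau:\Omega\to B(s,z)$ using the absolute continuity of the $b$-pushforward.

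First, I apply Lemma \ref{independence in shrinking} with the order bound $B$ taken sufficiently large depending on $\alpha_1,\alpha_2$. The matching of all joint mixed moments in $b_k, \overline{b}_k$ of total degree $\le B$ with the corresponding Gaussian moments, combined with the fact that the multivariate Gaussian $(c_k)_{k\in\mathcal{K}}$ is moment-determined, yields convergence in distribution $(b_k(x))_{k\in\mathcal{K}} \Rightarrow (c_k)_{k\in\mathcal{K}}$ as $E\to\infty$, with the rate controlled by $K,\delta,B$. By the L\'evy-Prokhorov characterisation of weak convergence, once $E$ is sufficiently large in terms of $\epsilon,\alpha_1,\alpha_2,K,\delta,B$, the Prokhorov distance between the law of $(b_k(x))$ and the Gaussian law $\gamma$ on $\mathbb{C}^{|\mathcal{K}|}$ is at most $\min(\alpha_1,\alpha_2)/2$.

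Next, I invoke Strassen's coupling theorem to produce a probability measure $\pi$ on $\mathbb{C}^{|\mathcal{K}|}\times\mathbb{C}^{|\mathcal{K}|}$ with marginals equal to the law of $(b_k(x))$ and to $\gamma$ respectively, and such that $\pi\bigl(\{(u,v):\|u-v\|_\infty>\alpha_2\}\bigr)<\alpha_1$. To lift $\pi$ back to $B(s,z)$, I disintegrate the normalised area on $B(s,z)$ along the map $b:x\mapsto (b_k(x))$, obtaining measurable conditional probabilities $\rho_u$ supported on the fibres $b^{-1}(u)$; this disintegration is valid because $B(s,z)$ is standard Borel and, by \cite[Lemma 6.4]{BW}, the pushforward $b_*(\mathrm{Unif}(B(s,z)))$ is absolutely continuous with respect to Lebesgue measure on $\mathbb{C}^{|\mathcal{K}|}$. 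Enlarging $\Omega$ if necessary to carry $(c_k)$ together with an independent uniform source, I first sample $U$ from the conditional law of the first marginal of $\pi$ given $V=(c_k(\omega))$, and then set $\tau(\omega)$ to be a point sampled from $\rho_{U(\omega)}$. Putting $\Omega^1:=\{\omega:\|U(\omega)-(c_k(\omega))\|_\infty\le\alpha_2\}$, property (3) is immediate, (2) follows from the Prokhorov bound, and (1) follows because the first marginal of the joint $(\tau,c)$ is, by construction, the normalised area on $B(s,z)$.

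The main obstacle is the measure-theoretic bookkeeping, since $b:B(s,z)\to\mathbb{C}^{|\mathcal{K}|}$ is neither injective nor surjective: one must invoke Rokhlin's disintegration theorem to produce the kernels $\rho_u$ and then apply a measurable selection argument to pick $\tau(\omega)\in b^{-1}(U(\omega))$ in a measurable way. Ensuring $\tau$ is essentially injective on $\Omega^1$ (so that $\vol(\tau(A))=\pi s^2 \mathbb{P}(A)$ is an identity and not merely an inequality) requires that $\Omega$ be standard Borel with an atomless measure and that the fibre measures $\rho_u$ are atomless, both of which hold under our assumptions. These steps mirror the construction carried out in \cite[Lemma 6.5]{BW} and \cite[Page 9]{BU}, adapted from $\mathbb{T}^2$ to the ball $B(s,z)$; the only essential new ingredient is the application of Theorem \ref{semi} already used inside Lemma \ref{independence in shrinking} to handle the oscillatory terms.
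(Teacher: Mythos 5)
Your proposal is correct and follows essentially the same route the paper takes, which is to note that Lemma \ref{independence in shrinking} together with the method of moments yields convergence in distribution of $(b_k)_{k\in\mathcal{K}}$ to $(c_k)_{k\in\mathcal{K}}$, and then to build $\tau$ as a measure-preserving coupling, citing \cite[Lemma 6.5]{BW}, \cite[Page 9]{BU} and \cite[Lemma 6.4]{BW} (absolute continuity of the law of $(b_k)$) for the details. You have unpacked that citation into its constituent steps: moment determinacy of the Gaussian to upgrade moment matching to weak convergence, Strassen's theorem to produce a coupling with small Prokhorov/escape probability, and disintegration plus measurable selection on the fibres of $b$ to lift the coupling to an actual map into $B(s,z)$. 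The one place where you are slightly under-cautious is the passage from $\tau_{*}\mathbb{P}=\mathrm{Unif}(B(s,z))$ (a statement about preimages) to property (1), which is about $\vol(\tau(A))$ (a statement about images); as you correctly flag, this requires essential injectivity of $\tau$, and the construction via sampling from the conditional kernels $\rho_u$ needs a further randomisation argument (e.g.\ realising the fibre sampling through a measurable family of isomorphisms $[0,1]\to b^{-1}(u)$ on an enlarged product space) to guarantee it. This gap is real but minor and is precisely what the cited \cite[Lemma 6.5]{BW} handles; your sketch otherwise matches the intended argument.
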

\subsection{Discarding $\phi_x$}
Before proving the main result of this section, we need the following lemma: 
\begin{lem}[Lemma 4, \cite{SO}]
	\label{Sodin}
	Let $R>1$ $\alpha_3,\alpha_4>0$, $\{\mu_n\}_{n\in \mathbb{N}}$ be a sequence of probability measures on $\mathbb{S}^1$ such that $\mu_n$ weak$^{\star}$ converges to some probability measure $\mu$. Then,  for all $n$ sufficiently large depending on $\alpha_3,\alpha_4$ and $R$, we have  
	\begin{align}
||F_{\mu_n}- F_{\mu}||_{\mathcal{C}^1(B(R))}\leq \alpha_3 \nonumber
	\end{align}
	outside an event of probability $\alpha_4$. 
\end{lem}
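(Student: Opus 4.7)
The plan is to build $F_\mu$ and $F_{\mu_n}$ on a common probability space through an explicit spectral construction, parametrised by $n$ only through a quantile transformation of a fixed underlying Gaussian noise, and then to control moments of $F_\mu-F_{\mu_n}$ in a sufficiently high Sobolev norm on a slightly enlarged ball.

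Concretely, for any probability measure $\nu$ on $\mathbb{S}^1$ invariant under $\lambda\mapsto-\lambda$, I would choose a measurable quantile map $\sigma_\nu:[0,1]\to\mathbb{S}^1$ which pushes Lebesgue measure forward to $\nu$ (by identifying $\nu$ with a probability measure on the half-circle and using the standard inverse-CDF construction, then extending arbitrarily by the symmetry). Fix two independent standard real Brownian motions $B_1,B_2$ on $[0,1]$, defined on a probability space $\Omega$ that does not depend on $n$, and set
\[
F_\nu(x)\;=\;\int_0^1\cos\!\bigl(2\pi\langle x,\sigma_\nu(u)\rangle\bigr)\,dB_1(u)\;+\;\int_0^1\sin\!\bigl(2\pi\langle x,\sigma_\nu(u)\rangle\bigr)\,dB_2(u).
\]
The It\^o isometry together with the evenness of $\cos$ and the symmetry of $\nu$ gives $\operatorname{Cov}(F_\nu(x),F_\nu(y))=\hat{\nu}(x-y)$, so $F_\mu$ and $F_{\mu_n}$ have the required laws. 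Because $\mu_n\to\mu$ weak-$\star$ and the limit distribution function is determined up to a countable set of atoms, the associated quantile functions satisfy $\sigma_{\mu_n}(u)\to\sigma_\mu(u)$ for Lebesgue-almost every $u\in[0,1]$.

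Next I would fix an integer $m\geq 3$ and a multi-index $\alpha$ with $|\alpha|\leq m$, and differentiate under the stochastic integral (legitimate since $|\sigma_\nu(u)|=1$ and all moments exist) to write $D^\alpha F_\mu(x)-D^\alpha F_{\mu_n}(x)$ as a stochastic integral of an integrand $\Phi^n_{x,\alpha}(u)$ whose modulus is bounded by a constant $C_{m,R}$ for $x\in\overline{B(R+1)}$ and which tends to $0$ pointwise a.e.\ in $u$ as $n\to\infty$. The It\^o isometry plus dominated convergence give
\[
\mathbb{E}\bigl|D^\alpha F_\mu(x)-D^\alpha F_{\mu_n}(x)\bigr|^2\;\xrightarrow[n\to\infty]{}\;0
\]
uniformly in $x\in \overline{B(R+1)}$ (uniformity follows either by Dini-type monotonicity after integrating over $x$, or by noting that the integrands are equicontinuous in $x$). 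Summing over $|\alpha|\leq m$ and integrating over $B(R+1)$ yields
\[
\mathbb{E}\,\|F_\mu-F_{\mu_n}\|_{H^m(B(R+1))}^{2}\;\xrightarrow[n\to\infty]{}\;0.
\]

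Finally, Sobolev embedding in dimension two gives $H^m(B(R+1))\hookrightarrow\mathcal{C}^1(B(R))$ for any $m\geq 3$, with an $R$-dependent constant $C_R$. Combining the two deterministic inequalities
\[
\|F_\mu-F_{\mu_n}\|_{\mathcal{C}^1(B(R))}\leq C_R\,\|F_\mu-F_{\mu_n}\|_{H^m(B(R+1))}
\]
with Markov's inequality and the $H^m$-moment bound above gives, for all $n$ large enough in terms of $R,\alpha_3,\alpha_4$, that $\|F_\mu-F_{\mu_n}\|_{\mathcal{C}^1(B(R))}\leq\alpha_3$ outside an event of probability at most $\alpha_4$. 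The main technical nuisance I expect is the careful construction of the quantile map $\sigma_\nu$ respecting the antipodal symmetry so that the resulting field is genuinely real-valued with the correct spectral measure; beyond this bookkeeping, every step reduces to routine Gaussian moment estimates together with the standard Sobolev embedding on a bounded planar domain.
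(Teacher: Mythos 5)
Your proposal is correct and follows the same overall strategy as the paper's proof: realise $F_{\mu}$ and $F_{\mu_n}$ as stochastic integrals over a common Gaussian noise, show that their difference tends to $0$ in $L^2(\Omega)$ in a norm controlling $\mathcal{C}^1(B(R))$, and conclude by Chebyshev/Markov. The difference is in how the coupling is produced. The paper works with white noises $G_n$, $G$ indexed by Borel sets (with $G(A)=N(0,\mu(A))$) and simply asserts that $G_n\to G$ in $L^2(\Omega)$, which is the vaguest step of its argument since it presupposes a joint construction of all the $G_n$ and $G$ on one probability space. You instead fix two Brownian motions once and for all and transport them by the quantile maps $\sigma_{\mu_n}$, $\sigma_{\mu}$; this is an explicit Skorokhod-type coupling, and the a.e.\ convergence $\sigma_{\mu_n}\to\sigma_{\mu}$ plus the It\^o isometry and dominated convergence then make the $L^2(\Omega)$ convergence genuinely routine. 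So your route buys rigour exactly where the paper is thin. The one point you should not wave away as pure bookkeeping: the a.e.\ convergence of the quantile maps requires that the diameter used to cut $\mathbb{S}^1$ into a half-circle not pass through an atom of $\mu$ (otherwise mass can converge to one endpoint of the interval while the limit places it at the other, and the sine term in your integrand then converges to the wrong sign on a set of positive measure). Since the atoms of $\mu$ form a countable, antipodally symmetric set, such a diameter always exists, and with that choice fixed your argument goes through; the remaining steps (differentiation under the Wiener integral, uniformity in $x$ via equicontinuity rather than Dini, $H^m\hookrightarrow\mathcal{C}^1$ in dimension two for $m\geq 3$) are all sound.
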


\begin{proof}
	We can associate to $\mu$ the Gaussian measure $G$ defined on $\mathbb{R}^2$ as follows: for any open and measurable (with respect to $\mu$) subset $A$ of $\mathbb{R}^2$ we let
	\begin{align}
	G(A)= N(0,\mu(A))\nonumber
	\end{align}
	where $N(0,\mu(A))$ is a Gaussian random variable with mean zero and variance $\mu(A)$. Moreover, if $A\cap B= \emptyset$, we require $G(A)$ and $G(B)$ to be independent. We define $G_n$ with respect to $\mu_n$ similarly. Since $\mu$ is compactly supported, we see that $G_n$ weak$^{\star}$ converges to $G$ and, since a normal random variable is square integrable, we obtain $G_n\rightarrow G$ in $L^2(\Omega)$ (recall that $\Omega$ is the common probability space of our random objects). By \cite[Theorem 5.4.2]{AT}, we have the $L^2(\Omega)$ representations 
	\begin{align}
	&F_{\mu_n}(x)= \int_{\mathbb{S}^1}e(\langle x\cdot\lambda\rangle)G_n(d\lambda) &F_{\mu}(x)= \int_{\mathbb{S}^1}e(\langle x\cdot\lambda\rangle)G(d\lambda). \label{13}
	\end{align}
	Since $\mu$ and $\mu_n$ are compactly supported, we can differentiate under the integral in \eqref{13}; bearing in mind that $G_n$ weak$^{\star}$ converges to $G$, we have 	$||F_{\mu_n}- F_{\mu}||_{\mathcal{C}^1(B(R))}\rightarrow 0$ as $n\rightarrow \infty$ in $L^2(\Omega)$. This implies the conclusion of the Lemma. 
\end{proof}
We are finally ready to state and prove the main result of this section: 
\begin{prop}
	\label{main prop}
Let $\epsilon>0$, $R>1$ and $\eta_1,\eta_2>0$, $f$ be as in \eqref{function}.  Suppose that $E \in S'$ is sufficiently large depending on $\epsilon,\eta_1,\eta_2$ and $R$. Then, uniformly for all $f$  flat, $s>E^{-1/2+\epsilon}$ and $z\in \mathbb{T}^2$, there exists a measurable map $\tau: \Omega\rightarrow B(s,z)$  and a subset $\Omega'\subset \Omega$ with the following properties: 

\begin{enumerate}
	\item For any measurable $A\subset \Omega$, we have $\vol(\tau(A))= \pi s^2\mathbb{P}(A)$.
	\item  $\mathbb{P}(\Omega')>1-\eta_1$ .
	\item For all $\omega\in \Omega'$, we have	$||F_{\tau(\omega)}(y)- F_{\mu_{f}}(Ry,w)||_{ \mathcal{C}^1(B(1))}\leq \eta_2$ 
\end{enumerate}

\end{prop}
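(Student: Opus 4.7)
The plan is to chain three approximations via the triangle inequality. Define the discretised Gaussian field
\begin{align*}
\Phi(y,\omega) := \sum_{k\in\mathcal{K}}\mu_{f}(I_k)^{1/2}c_k(\omega)\,e(\langle R\zeta_k,y\rangle),
\end{align*}
where the $c_k$ are the i.i.d.\ complex standard Gaussians of Lemma \ref{independence in shrinking}, and observe that $\Phi(\cdot,\omega)=F_{\nu}(R\,\cdot,\omega)$ for the discrete measure $\nu:=\sum_{k\in\mathcal{K}}\mu_{f}(I_k)\delta_{\zeta_k}$ on $\mathbb{S}^1$. The chain
\begin{align*}
F_{\mu_{f}}(R\,\cdot)\ \longrightarrow\ \Phi\ \longrightarrow\ \phi_{\tau(\omega)}\ \longrightarrow\ F_{\tau(\omega)}
\end{align*}
will be controlled respectively by Lemma \ref{Sodin}, Corollary \ref{tau}, and Lemma \ref{first approx}.

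Given $R,\eta_1,\eta_2>0$, first choose $K$ large and then $\delta>0$ small, both depending on $R,\eta_1,\eta_2$, so that $\nu$ approximates $\mu_{f}$ sufficiently well in the weak-$\star$ topology; uniformity in $f$ flat is automatic, since $|\int g\,d\mu_{f}-\int g\,d\nu|$ is bounded by the modulus of continuity of $g$ on $\mathbb{S}^1$ plus the total mass $2K\delta$ of the arcs discarded by the threshold \eqref{41}. Applying a rescaled version of Lemma \ref{Sodin} to the pair $(\nu,\mu_{f})$ then yields $\|F_{\nu}(R\,\cdot)-F_{\mu_{f}}(R\,\cdot)\|_{\mathcal{C}^1(B(1))}\leq\eta_2/3$ outside an event of probability at most $\eta_1/3$. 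Next, invoke Corollary \ref{tau} with $\alpha_1=\eta_1/3$ and $\alpha_2$ sufficiently small in terms of $R$ and $K$: since $\|e(\langle R\zeta_k,\cdot\rangle)\|_{\mathcal{C}^1(B(1))}\ll R$ and $|\mathcal{K}|\leq 2K+1$, the definition \eqref{phi} of $\phi_x$ gives $\|\phi_{\tau(\omega)}-\Phi(\cdot,\omega)\|_{\mathcal{C}^1(B(1))}\leq\eta_2/3$ uniformly on $\Omega^1$.

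For the third step, Lemma \ref{first approx} provides
\begin{align*}
\frac{1}{\pi s^2}\int_{B(s,z)}\|F_x-\phi_x\|_{\mathcal{C}^1(B(1))}\,dx\ \ll\ R^{6}K\delta+R^{8}K^{-2}+R^{8}E^{-\epsilon/3},
\end{align*}
which, for $E\in S'$ sufficiently large with respect to $R,K,\delta$, is much smaller than $\eta_1\eta_2$. Markov's inequality in $x$ then produces a set $\mathcal{B}\subset B(s,z)$ with $\vol(\mathcal{B})\leq\pi s^2\cdot\eta_1/3$ on whose complement $\|F_x-\phi_x\|_{\mathcal{C}^1(B(1))}\leq\eta_2/3$; since $\tau|_{\Omega^1}$ is measure-preserving, $\tau^{-1}(\mathcal{B})\cap\Omega^1$ has probability at most $\eta_1/3$. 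Taking $\Omega'$ to be the intersection of $\Omega^1$ with the good events from the first two steps and with $\Omega\setminus\tau^{-1}(\mathcal{B})$, we obtain $\mathbb{P}(\Omega')\geq 1-\eta_1$ and, by the triangle inequality, $\|F_{\tau(\omega)}(\cdot)-F_{\mu_{f}}(R\,\cdot,\omega)\|_{\mathcal{C}^1(B(1))}\leq\eta_2$ on $\Omega'$. Condition (1) over all of $\Omega$ (rather than only $\Omega^1$) is handled by extending $\tau$ measurably to $\Omega\setminus\Omega^1$ using the leftover volume of $B(s,z)$.

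The delicate point is the \emph{order} of parameter choices: $K$ must be fixed first in terms of $R,\eta_1,\eta_2$; then $\delta$ small in terms of $K$; then $\alpha_2$ small in terms of $R,K$; only then may $E$ be taken large enough, uniformly in $f$ flat, to render the Lemma \ref{first approx} remainder negligible with respect to all previous quantities. This is also where the hypothesis $E\in S'$ is essentially used, through the spectral quasi-correlation bound of Theorem \ref{semi} that powered Lemma \ref{first approx}.
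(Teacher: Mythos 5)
Your proof is correct and follows essentially the same route as the paper: you chain $F_{\mu_f} \approx F_\nu$ (Lemma \ref{Sodin}), $F_\nu \approx \phi_{\tau(\omega)}$ (Corollary \ref{tau}), and $\phi_x \approx F_x$ (Lemma \ref{first approx} plus Markov), with the same parameter hierarchy ($K$ in terms of $R,\eta_1,\eta_2$; then $\delta$; then the precision in Corollary \ref{tau}; then $E$). Your explicit remark on extending $\tau$ off $\Omega^1$ to secure condition (1) over all of $\Omega$ is a careful touch the paper glosses over, but otherwise the argument is the same.
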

\begin{proof}
Let $\mathcal{K},K,\delta$ be as in Section \ref{approximating} and let $F_K(Ry,\omega):=\sum_{k\in \mathcal{K}}\mu_{f}(I_k)^{1/2} c_k(\omega) e(\langle \zeta_k,Ry\rangle)$. Thanks to Corollary \ref{tau} with $\alpha_1=\eta_1/3$ and $\alpha_2=1/K^2$, there exist  $\tau: \Omega\rightarrow B(s,z)$ and $\Omega^1\subset \Omega$ such that: 
\begin{itemize}
	\item For any measurable $A\subset \Omega'$, we have $\vol(\tau(A))= \pi s^2\mathbb{P}(A)$.
	\item  $\mathbb{P}(\Omega^1)>1-\eta_1/3$  .
	\item For all $\omega\in \Omega'$, we have	
	\begin{align}
	||\phi_{\tau(\omega)}(y) -F_K(Ry,\omega)||_{\mathcal{C}^1}\ll RK \alpha_2= \frac{R}{K}\leq \eta_2/3 \label{main1}
	\end{align}
 provided $K$ is sufficiently large depending on $R$ and $\eta_2$. 
\end{itemize}

\begin{claim}
	\label{claim1}
	 There exists some $\Omega^2\subset \Omega$ with $\mathbb{P}(\Omega^2)>1-\eta_1/3$ such that 
	\begin{align}
	||F_K(Ry,\omega)- F_{\mu_{f}}(Ry,\omega)||_{\mathcal{C}^1}\leq \eta_2/3\label{main2}
	\end{align}
	for all $K$ sufficiently large depending on $\eta_1,\eta_2$ and $R$.
\end{claim}  
	 To prove the claim, observe that $F_K$ is a Gaussian field with spectral measure 
	\begin{align}
	\mu_K= \sum_{k\in \mathcal{K}} \mu_{f}(I_k) \delta_{\zeta_k}. \nonumber
	\end{align}
	By definition of $\mu_f$, we have $\sup_{A\subset \mathbb{S}^1}	|\mu_{f}(A)- \mu_K(A)| \ll \delta K $. So, taking $\delta<1/K^2$ and $K$ sufficiently large, the claim follows from Lemma \ref{Sodin}.
 
 \vspace{2mm}
  
  Finally, by Lemma \ref{first approx} and Markov's inequality, we have 
  \begin{align}
   || F_x- \phi_x||_{\mathcal{C}^1}  \leq \eta_2/3 \label{main3}
   \end{align}
  for all $x\in B\subset B(s,z)$, where
  \begin{align}
  	  (\pi s^2)^{-1} \vol(B(s,z) \backslash B)\ll \eta_2^{-1}\left( R^{6}K\delta  + R^{8}K^{-2} + R^{8}E^{-(1/3)\epsilon} \right) \leq \eta_1/3 . \label{5.1}
  \end{align}
  for $K$ and $E$ sufficiently large in terms of $R$, $\eta_1$ and $\eta_2$. We briefly summaries our choices of parameters: $R,\eta_1,\eta_2$ are fixed, $\delta<1/K^2$, $K$ is large depending $R,\eta_1,\eta_2$ and $E$ is large depending on $R,\eta_1,\eta_2$ and $K$. We are now ready to conclude the proof. Let $\Omega'= \Omega^1 \cap \Omega^2 \cap \tau^{-1} (B)$, then $\tau$ restricted to $\Omega'$ satisfies $(1)$. By Corollary \ref{tau}, Claim \ref{claim1} and \eqref{5.1}, we also have $\mathbb{P}(\Omega')\geq 1- \eta_1$ so $(2)$ holds. Finally, $(3)$ follows by \eqref{main1}, \eqref{main2} and \eqref{main3}, valid for all $\omega \in \Omega'$. This concludes the proof of the Proposition. 
\end{proof}

\section{Concluding the proof}
\label{end}
We are finally ready to prove Theorem \ref{theorem 3}. 
\begin{proof}[Proof of Theorem \ref{theorem 3}]
 Pick some $\eta_1, \eta_2>0$  to be chosen later, and let $\tau$ and $\Omega'$ be given by Proposition \ref {main prop}.  Then, by Proposition \ref{semi-locality}, we have 
	\begin{align}
	\mathcal{N}_f(s,z)&= \frac{E}{R^2}\int_{B(s,z)} \mathcal{N}( F_x)dx + O\left(\frac{Es^2}{\sqrt{R}}\right)   \nonumber \\
	&= \frac{E}{R^2}\int_{\tau^{-1}(\Omega')} \mathcal{N}( F_x)dx + \frac{E}{R^2} \int_{B(s,z)\backslash \tau^{-1}(\Omega')} \mathcal{N}(F_x)dx + O\left(\frac{Es^2}{\sqrt{R}} \right). \label{53} 
	\end{align}
By part $(3)$ of Proposition \ref{main prop}, we may write $F_x(y)=   	F_{\mu_{f}}(Ry) +\psi$ for $x\in \tau^{-1}(\Omega')$ and some function $\psi$ with $||\psi||_{\mathcal{C}^1}\leq \eta_1$. Thus, we can rewrite \eqref{53}, bearing in mind part $(1)$ of Proposition \ref{main prop}, as
	\begin{align}
	\mathcal{N}_f(s,z)&= \frac{\pi s^2 E}{R^2}\int_{\Omega'} \mathcal{N}( F_{\mu_{f}} +\psi, R )d\omega  + \frac{E}{R^2}\int_{B(s,z)\backslash \tau^{-1}(\Omega')} \mathcal{N}(F_x)dx  + O\left(\frac{Es^2}{\sqrt{R}}\right) \nonumber \\
	&= \frac{ \pi s^2 E}{R^2}\left(\int_{\Omega} \mathcal{N}( F_{\mu_{f}} +\psi, R ) d\omega- \int_{\Omega\backslash \Omega'} \mathcal{N}( F_{\mu_{f}} +\psi, R ) d\omega \right)  \nonumber \\
	 & \hspace{1cm} + \frac{  E}{R^2}\int_{B(s,z)\backslash \tau^{-1}(\Omega')} \mathcal{N}(F_x)dx + O\left(\frac{Es^2}{\sqrt{R}}\right) \label{52}
	\end{align}
	where in the second equality we set $\psi(y,\omega)=0$ for $\omega \not \in \Omega'$. By the Faber-Krahn inequality, $\mathcal{N}(F_x)\ll R^2$, and since $F_{\mu_{f}} +\psi= F_{\tau(\omega)}$, we have $ \mathcal{N}( F_{\mu_{f}} +\psi ) \ll R^2 $ uniformly for all $\omega \in \Omega'$. For $\omega \not \in \Omega'$, by definition, we have $\Delta F_{\mu_{f}}=-R^2F_{\mu_{f}}$. Thus, again by the Faber-Krahn inequality, $ \mathcal{N}( F_{\mu_{f}} +\psi ) \ll R^2 $ holds uniformly for all $\omega \in \Omega$.  Therefore, taking $\eta_2= 1/\sqrt{R}$, the second and third integrals in \eqref{52} are bounded by $O(R^{3/2})$. Thus, \eqref{52} can be re-written as 
	\begin{align}
			\mathcal{N}_f(s,z)=\frac{\pi s^2 E}{R^2}\mathbb{E}[ \mathcal{N}( F_{\mu_{f}} +\psi, R )] +O\left(\frac{Es^2}{\sqrt{R}}\right). \nonumber
	\end{align}
	Taking $\eta_1$ small enough in terms of $R$ via Proposition \ref{stability} and then taking $R\rightarrow \infty$, we deduce  
		\begin{align}
	\mathcal{N}_f(s,z)= c_{NS}(\mu_{f})\pi s^2 E(1+ o_{R\rightarrow \infty}(1))  . \nonumber
	\end{align}
	and the Theorem follows by taking $R$ to be an arbitrarily slowly, depending on all the parameters, growing function of $E$. 
\end{proof}

\section*{Acknowledgement}
The author would like to thank Igor Wigman for many useful discussions. Alejandro Rivera and Maxime Ingremeau for stimulating conversations and Priya Lakshmi for her comments on an early draft of this article. This work was supported by the Engineering and Physical Sciences Research Council [EP/L015234/1].  
The EPSRC Centre for Doctoral Training in Geometry and Number Theory (The London School of Geometry and Number Theory), University College London.


\begin{thebibliography}{9}
					\bibitem{AT}
		R. J. Adler, J. E. Taylor,  \emph{ Random fields and geometry}, Springer Monographs in Mathematics. Springer, New York, (2007).
		
		\bibitem{BMW}
		J. Benatar, D. Marinucci and I. Wigman  \emph{ Planck-scale distribution
			of nodal length of arithmetic random waves},  Preprint arXiv:1710.06153 (2017).
		
		\bibitem{B1}
		M. Berry, \emph{Regular and irregular semiclassical wavefunctions}, J. Phys. A 10, no. 12, (1977).
		
		\bibitem{B2}
		M. Berry, \emph{ Semiclassical mechanics of regular and irregular motion. Chaotic behavior of deterministic systems} Les	Houches, 171-271,  (1981).
		
		\bibitem{BB}
		E. Bombieri, J. Bourgain, \emph{A problem on sums of two squares}, IMRN 11, 3343-3407 (2015).
		
		
		\bibitem{BU}
		J. Bourgain, \textit{On toral eigenfunctions and the random wave model}. Israel J. Math. 201, no. 2, 611-630, (2014). 
		
		\bibitem{BW}
		J. Buckley,  I. Wigman, \emph{On The Number Of Nodal Domains Of Toral Eigenfunction}.
		Ann. Henri Poincar\'{e} 17.11 , 3027-3062.(2016).
		
		\bibitem{CKW}
		V. Cammarota, O. Klurman, I. Wigman, \emph{Boundary effect on the nodal length for Arithmetic Random Waves, and spectral semi-correlations}, arxiv-preprint https://arxiv.org/abs/1903.10602
		
		\bibitem{C}
		S. Y.	Cheng, \emph{ Eigenfunctions and nodal sets}. Comm. Math. Helv. 51 , 43-55 (1976). 
		
		\bibitem{CI}
		J. Cilleruelo, \emph{ The distribution of the lattice points on circles}, J. Number Theory 43, no. 2, 198-202, (1993). 
		
		
		
		\bibitem{DV}
		Y. Colin de Verdi\`{e}re, \emph{Ergodicit\`{e} et fonctions propres du Laplacien}, Comm. Math. Phys. 102, 497-502, (1985)
		
		\bibitem{CH}
		R. Courant,   D. Hilbert, \emph{ Methoden der mathematischen Physik. I.} (German) Dritte Auflage. Heidelberger Taschenbücher, Band 30. Springer-Verlag, (1968).
		
		\bibitem{DF}
		H. Donnelly, C. Fefferman \emph{ Nodal sets of eigenfunctions on Riemannian manifolds.}
		Invent. Math. 93 , 161-183. (1988). 
		
		\bibitem{E}
		P. D. T. A. Elliott, \emph{ Probabilistic number theory. II. Central limit theorems.} , Fundamental Principles of Mathematical Sciences, Springer-Verlag, (1980).
		
		\bibitem{F}
		H. Federer, \emph{ Geometric measure theory}, Berlin-Heidelberg-New York, Springer, (1969). 
		
		\bibitem{HR}
		H. Hezari, G. Rivi\`{e}re, \emph{$L^p$
			norms, nodal sets, and quantum ergodicity}, Adv. Math. 290, 938-966,  (2016).
		
		\bibitem{H}
		X. Han, \emph{Small scale quantum ergodicity in negatively curved manifolds,} Nonlinearity 28, no. 9, (2015).
		
		\bibitem{I}
		M. Ingremeau \emph{Lower bounds for the number of nodal domains for sums of two distorted plane waves in non-positive curvature}, https://arxiv.org/abs/1612.01911. 
		
		\bibitem{JZ}
		J. Jung, S. Zelditch, \emph{ Number of nodal domains and singular points of eigenfunctions of negatively curved surfaces with an isometric involution}, J. Differential Geom. 102, no. 1, 37-66, (2016).
		
		\bibitem{GW}
		A. Granville, I. Wigman, \emph{Planck-scale mass equidistribution of toral Laplace eigenfunctions}, Communications in Mathematical
		Physics, 355(2), (2017) .
		
		\bibitem{KW}
		P. Kurlberg , I. Wigman, \emph{ On Probability Measures Arising From Lattice Points On Circles}. Mathematische Annalen (2016).
		
		\bibitem{KW2}
		P. Kurlberg, I. Wigman, \emph{ Variation of the Nazarov-Sodin constant for random plane waves and arithmetic random waves}. Adv. Math. , 516-552, (2018).
		
		\bibitem{LE}
		M. Ledoux, \emph{ The concentration of measure phenomenon}, American Mathematical Society, Providence, RI, (2001).
		
		\bibitem{LR}
		S. Lester, Z. Rudnick, \emph{Small scale equidistribution of eigenfunctions on the torus}. Comm. Math. Phys. 350, no. 1, 279-300, (2017). 
		
		\bibitem{LEW}
		H. Lewy, \emph{On the minimum number of domains in which the nodal lines of spherical
			harmonics divide the sphere}, Comm. Partial Differential Equations 2, no. 12,
		1233-1244, (1977). 
		
		\bibitem{LM}
		A. Logunov, E. Malinnikova, \emph{Nodal sets of Laplace eigenfunctions: estimates of the Hausdorff measure in dimensions two and three}, 50 years with Hardy spaces, 333-344, (2018).
		
		\bibitem{L1}
		A. Logunov, \emph{Nodal sets of Laplace eigenfunctions: proof of Nadirashvili's conjecture and of the lower bound in Yau's conjecture}, Ann. of Math. (2) 187 , no. 1, 241-262, (2018). 
		
		\bibitem{L2}
		A. Logunov, \emph{ Nodal sets of Laplace eigenfunctions: polynomial upper estimates of the Hausdorff measure}, Ann. of Math. (2) 187, no. 1, 221-239, (2018). 
		
		\bibitem{LS}
		W. Z. Luo, P. Sarnak, \emph{ Quantum ergodicity of eigenfunctions on $PSL_2(Z)\backslash H^2$
		}, Inst. Hautes Etudes Sci. Publ.  Math, no. 81, 207-237,  (1995). 
		
		\bibitem{M}
		D. Mangoubi, \emph{ Local asymmetry and the inner radius of nodal domains} . Comm. Partial Differential Equations 33, 7-9, 1611-1621, (2008).
		
		
		
		\bibitem{N}
		F. Nazarov, \emph {Local estimates for exponential polynomials and their applications to
			inequalities of the uncertainty principle type.} Algebra i Analiz 5, no. 4, 3–66, (1993).
		
		\bibitem{NS}
		F. Nazarov, M. Sodin, \emph{ Asymptotic laws for the spatial distribution and the number of connected components of zero sets of Gaussian random functions}. Zh. Mat. Fiz. Anal. Geom. 12, 205-278, (2016). 
		
		\bibitem{HU}
		P.  Humphries,  \emph{Equidistribution in Shrinking Sets and L
			4-Norm Bounds for Automorphic Forms}, Math. Ann. 371, no. 3-4, 1497-1543, (2018).
		
		\bibitem{HU1}	
		P. Humphries, R. Khan, \emph{On the Random Wave Conjecture for Dihedral Maaß Forms}, to appear in Geometric and Functional Analysis (2019). 
		
		\bibitem{SA1}
		A. Sartori \emph{Mass distribution for toral eigenfunctions via Bourgain's de-randomisation},  Q. J. Math. (to appear), https://arxiv.org/abs/1812.00962. 
		
		\bibitem{SA}
		A. Sartori \emph{Quasi-spectral correlations and phase-transition for arithmetic random waves} in preparation
		
		\bibitem{SA2}
		A. Sartori, \emph{ On the fractal structure of attainable probability measures}, Bull. Pol. Acad. Sci. Math. 66, no. 2, 123-133, (2018). 
		
		\bibitem{S}
		A. Snirel’man, \emph{Ergodic properties of eigenfunctions}, Uspekhi Mat. Nauk 180, 181-182, (1974).
		
		\bibitem{SO}
		M. Sodin, \emph{Lectures on random nodal portraits}, Probability and statistical physics in St. Petersburg, 91, 395-422 (2016).
		
		
		\bibitem{ST}
		A. Stern, \emph{ Bemerkungen uber asymptotisches Verhalten von Eigenwerten und Eigenf\"{u} nctionen} (German) InauguralDissertation zur Erlangung der Doktorw\"{u}rde der Hohen Mathematisch-Naturwissenschaftlichen Fakult\"{a}t der
		Georg August-Universitat zu Gottingen, (1924 ).
		
		\bibitem{T}
		P. Tur\'{a}n, \emph{On the distribution of zeros of general exponential polynomials},
		Publ. Math. Debrecen 7, 130-136, (1960). 
		
		\bibitem{WY}
		I. Wigman, N. Yesha  \emph{CLT for mass distribution of Toral Laplacian eigenfunctions}, Mathematika 65, no. 3, 643–676, (2019).
		
		
		\bibitem{Y}
		M. Young,  \emph{The quantum unique ergodicity conjecture for thin sets}, Adv. Math. 286, 958-1016, (2016).
		
		\bibitem{Z}
		S. Zelditch, \emph{Uniform distribution of eigenfunctions on compact hyperbolic surfaces}, Duke Math. J. 55, 919-941, (1987). 
	

	\end{thebibliography}
\end{document}